\documentclass{article}
\usepackage[T1]{fontenc}                % Paquete de español para los acentos y demás caracteres.
\usepackage{amsmath,latexsym,amssymb}   % Para los símbolos matemáticos.
\usepackage{theorem}                    % Para los teoremas, proposiciones...

\theoremstyle{break}                                   % Después de teorema pasa a la siguiente linea para el enunciado.
\newtheorem{Teo}{\textbf{Theorem}}[section]
\newtheorem{Def}[Teo]{\emph{\textbf{Definition}}}
\newtheorem{Ej}[Teo]{\textbf{Example}}
\newtheorem{Pro}[Teo]{\textbf{Proposition}}
\newtheorem{Lem}[Teo]{\textbf{Lemma}}
\newtheorem{Cor}[Teo]{\textbf{Corollary}}

\newcommand{\Beachy}{Beachy-Blair }
\newcommand{\Zel}{Zelmanowitz}
\newcommand{\K}{\mathbb{K}}
\newcommand{\Z}{\mathbb{Z}}

\newcommand{\binv}{\setminus}
\newcommand{\0}{\{0\}}

\newcommand{\tab}{\hspace*{0.5cm}}
\newcommand{\bc}{\begin{center}}
\newcommand{\ec}{\end{center}}
\newcommand{\bit}{\begin{itemize}}
\newcommand{\eit}{\end{itemize}}
\newcommand{\ben}{\begin{enumerate}}
\newcommand{\een}{\end{enumerate}}

\newcommand{\DCC}{DCC$\bot$}

\newcommand{\ds}{\displaystyle}

\newenvironment{proof}{\par\noindent{\bf Proof. }}{$\cqd$\par\bigskip}
\newcommand{\cqd}{\enspace\vrule height6pt width4pt depth2pt}
\begin{document}
\title{Rings with the \Beachy condition \thanks{ Research partially supported by grants of MICIN-FEDER (Spain) MTM2008-06201-C02-01, Generalitat de
Catalunya 2009SGR1389.}}

\author{Elena Rodríguez-Jorge}
\date{}
\maketitle

\begin{abstract}
A ring satisfies the left \Beachy condition if each of its faithful left ideal is cofaithful. Every left zip ring satisfies the left
\Beachy condition, but both properties are not equivalent. In this paper we will study the similarities and the differences between zip rings and rings
with the \Beachy condition. We will also study the relationship between the \Beachy condition of a ring and its skew polynomial and skew power series
extensions. We give an example of a right zip ring that is not left zip, proving that the zip property is not symmetric.
\end{abstract}

\noindent {\bf Key words:} Zip rings, rings with the \Beachy condition, Armendariz rings.

\noindent {\bf 2000 Mathematics Subject Classification:} Primary 16D25, 16P60, Secondary 16S36.

\section{Introduction}

\tab The first time that the concept of zip ring appeared as it is known nowadays was in 1989,
by Faith in \cite{Faith}. Previously, Beachy and Blair in \cite{Beachy} (1975) and \Zel\ in \cite{Zelm} (1976), introduced a more general property.
In \cite{Beachy}, Beachy and Blair defined rings whose faithful left ideals are cofaithful (we will call these rings, rings with the left \Beachy
condition) and, in \cite{Zelm}, \Zel\ worked with rings with the ``finite intersection property'' on annihilator left ideals. Both properties are
equivalent, but they were introduced independently and parallelly, obtaining quite different results.

\Zel, in \cite{Zelm}, noted that his condition was less restrictive than \DCC\ (descending chain condition on annihilators), that is, there exist rings
with the left \Beachy condition that do not satisfy the left \DCC, but every ring satisfying the left \DCC\ satisfies the left \Beachy condition.
In fact, the reason why \Zel\ introduced his property was to weaken the chain condition on annihilators. From the point of view of Beachy and Blair,
the \Beachy condition arose in order to give a characterization of semiprime left Goldie rings. They proved in \cite{Beachy} that a semiprime ring is
left Goldie (that is, it satisfies the ascending chain condition on left annihilators and it has finite uniform dimension) if and only if it satisfies
the left \Beachy condition and that every nonzero left ideal contains a nonzero uniform left ideal.

Let us recall some basic definitions.
\begin{Def}
\tab A ring $R$ is left zip if for every subset $X\subseteq R$ such that $l.ann_R(X) = \{0\}$, there exists a finite subset $F\subseteq X$
such that $l.ann_R(F) = \{0\}$, where $l.ann_R(X) = \{r\in R\mid rx = 0\text{ for all } x\in X\}$ denotes the left annihilator of $X$ in $R$.
\end{Def}
\tab Analogously we can define right zip ring. A ring is zip if it is both left and right zip.

\begin{Def}
\tab A ring $R$ satisfies the left \Beachy condition if for every faithful left ideal $I$ of $R$ (that is, $l.ann_R(I) = \{0\}$), there exists a
finite subset $F\subseteq I$ such that $l.ann_R(F) = \{0\}$.
\end{Def}
\tab Analogously we can define the right \Beachy condition. A ring satisfies the \Beachy condition if it satisfies both the left and the right \Beachy
conditions.

Throughout this paper, all rings are supposed to be associative with identity. Unless otherwise stated, all results are given on the left but are also
true on the right.

%%
%\begin{Remark}
%Let $(R,\cdot,+)$ be an associative ring. We define the ring $(R', *, +)$ as the set $R' = R$, with the same additive operation than $(R,\cdot,+)$ and
%with the product given by $a * b = b\cdot a$ for all $a,b\in R'$.
%
%Note that $l.ann_R(r) = r.ann_{R'}(r)$ and $r.ann_R(r) = l.ann_{R'}(r)$ for all $r\in R$. In addition, $(R[x])' = R'[x]$.
%\end{Remark}
%%

Every left zip ring satisfies the left \Beachy condition, but there are examples of rings with the left \Beachy condition that are not left zip
($R[[x]]$ in Proposition \ref{prob2} is an example of this kind of rings). However, for commutative or reduced rings, it is not difficult to see that
both properties are equivalent. In general, we have the following:
\begin{center}
Left \DCC  $\begin{array}{c}
                        \\
                        \Rightarrow \\
                        \nLeftarrow
             \end{array}$ Left Zip $\begin{array}{c}
                                                     \\
                                                     \Rightarrow \\
                                                     \nLeftarrow
                                      \end{array}$ Left \Beachy condition.
\end{center}

Faith, in \cite{Faith2}, proposed the following questions regarding zip rings.\\

Let $R$ be any ring.
\ben
\item Does $R$ being a left zip ring imply $R[x]$ being left zip?
\item Does $R$ being a left zip ring imply $M_n(R)$ being left zip?
\item Does $R$ being a left zip ring imply $R[G]$ being left zip when $G$ is a finite group?
\een

Cedó in \cite{Cedo} (1991) answered all these questions in the negative. However, when $R$ is a commutative ring, Beachy and Blair
(\cite[Proposition 1.9]{Beachy}) gave a positive answer to 1. and Cedó (\cite[Proposition 1]{Cedo}) gave a positive answer to 2. In \cite{Faith2},
Faith proved that if $R$ is a commutative zip ring and $G$ is a finite abelian group, then the group ring $R[G]$ is zip.

It is natural then to ask these same questions for rings with the left \Beachy condition.

Beachy and Blair proved that the \Beachy condition is Morita invariant (\cite[Corollary 1.2]{Beachy}), and, therefore, a ring
$R$ satisfies the left \Beachy condition if and only if $M_n(R)$ satisfies the left \Beachy condition for all $n\geq 1$, so the answer to 2. is positive
for rings with the left \Beachy condition, even in the noncommutative case.

Note that the example of Cedó of a domain $S$ such that $M_n(S)$ is not right zip (\cite[Example 1]{Cedo}), gives us an example of a ring, $M_n(S)$,
that satisfies the right \Beachy condition, since $S$ satisfies the right \Beachy condition, but is not right zip.

It is widely believed that the answer to 1. for rings with the left \Beachy condition should be negative in general, but it remains as an open problem,
since no counterexample has been found so far. However, in Section \ref{poli} we will prove that, under certain conditions, the answer to 1. is positive
for rings with the left \Beachy condition. In Section \ref{series} we will study the relationship between the \Beachy condition of a ring and its skew
power series extension, and compare our results with similar known results for zip rings. Finally, in Section \ref{ejemplos}, we will construct an
example that answers in the negative some open problems regarding the \Beachy condition and the zip property.

\section{Skew polynomial extensions over rings with the \Beachy condition}\label{poli}

\tab In this section we will study the relationship between the \Beachy condition of a ring and its skew polynomial extension.
Let $R$ be a ring and $\alpha$ be an endomorphism of $R$. The $\alpha$-skew polynomial extension of $R$, denoted by $R[x;\alpha]$, is the ring with
elements of the form $\ds{\sum_{i=0}^n} a_ix^i$, with $a_i\in R$, and with the multiplication defined by
$$(\sum_{i=0}^n a_ix^i)(\sum_{j=0}^m b_jx^j) = \sum_{i=0}^n\sum_{j=0}^m a_i\alpha^i(b_j) x^{i+j}$$
and the sum defined by
$$(\sum_{i=0}^n a_ix^i)+(\sum_{j=0}^m b_jx^j) = \sum_{i=0}^{\max\{n,m\}} (a_i+b_i) x^i.$$
\tab In particular, $xb = \alpha(b)x$ for all $b\in R$.

\begin{Def}
\tab Let $R$ be a ring and $\alpha$ be an endomorphism of $R$. $R$ is $\alpha$-skew Armendariz if for all $\ds{f(x)=\sum_{i=0}^n a_ix^i}$,
$\ds{g(x)=\sum_{j=0}^n b_jx^j}$ $\in R[x;\alpha]$ such that $f(x)g(x)=0$ we have that $a_i\alpha^i(b_j)=0$ for all $i,j$. When $\alpha$ is the
identity of $R$, we say that $R$ is an Armendariz ring.
\end{Def}

Let $\alpha$ be an automorphism of $R$. Let $\Gamma = \{l.ann_R(U)\mid U\subseteq R\}$ and $\Delta = \{l.ann_{R[x;\alpha]}(V) \mid V\subseteq
R[x;\alpha]\}$. Since for all $U\subseteq R$ we have that $l.ann_{R[x;\alpha]}(U)=R[x;\alpha]l.ann_R(U)$, we can define the map
$\phi: \Gamma\to\Delta$ by $\phi(A)=R[x;\alpha]A$, for all $A\in \Gamma$.

On the other hand, for all $V\subseteq R[x;\alpha]$ we define $C_V\subseteq R$ by $C_V = \ds{\bigcup_{f(x)\in V}}C_f$ and
$C_f = \{a_0, a_1, \dots, a_n\}\cup\{0\}$, where $f(x)=\ds{\sum_{i=0}^n a_ix^i}$. Then, since $l.ann_{R[x;\alpha]}(V)\cap R = l.ann_R(C_V)$,
we can define the map $\psi:\Delta\to\Gamma$ by $\psi(B)=B\cap R$ for all $B\in \Delta$.

It is easy to see that $\phi$ is always injective and that $\psi$ is always surjective.
Moreover, Cortes, in \cite{Cortes2}, noted that $\phi$ is bijective if and only if $\psi$ is bijective, and, in this case, one is the inverse of the
other, and proved, similarly as Hirano did in \cite{Hirano} for polynomial rings, that this happens if and only if the ring $R$ is $\alpha$-skew
Armendariz (\cite[Lemma 2.7]{Cortes2}).

Note that neither the zip property nor the \Beachy condition pass to subrings in general. However, Cortes in \cite{Cortes2} proved that
if $R[x;\alpha]$, where $\alpha$ is an automorphism of a ring $R$, is left zip, then the ring $R$ is left zip as well. In the following Lemma we will
see that, although we can't prove the same for the \Beachy condition, adding another assumption, a similar result can be proven.
\begin{Lem}\label{lema1}
\tab Let $R$ be a ring and $\alpha$ be an automorphism of $R$. Then, if $R[x;\alpha]$ satisfies the left \Beachy condition and $R$ is $\alpha$-compatible
(i.e. for all $a,b\in R$, $ab = 0$ if and only if $a\alpha(b) = 0$), then $R$ satisfies the left \Beachy condition.
\end{Lem}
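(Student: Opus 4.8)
The plan is to lift the faithful left ideal of $R$ under consideration to a faithful left ideal of $R[x;\alpha]$, apply the hypothesis there, and then use $\alpha$-compatibility to transport the resulting finite cofaithful set back down to $R$. So let $I$ be a left ideal of $R$ with $l.ann_R(I) = \{0\}$, and let $\mathcal{I} = R[x;\alpha]I$ be the left ideal of $R[x;\alpha]$ it generates. Since $I \subseteq \mathcal{I}$, we have $l.ann_{R[x;\alpha]}(\mathcal{I}) \subseteq l.ann_{R[x;\alpha]}(I)$, and by the equality $l.ann_{R[x;\alpha]}(U) = R[x;\alpha]\,l.ann_R(U)$ for subsets $U \subseteq R$ recalled above, the latter is $R[x;\alpha]\,l.ann_R(I) = \{0\}$. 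Hence $\mathcal{I}$ is a faithful left ideal of $R[x;\alpha]$, and the left \Beachy condition of $R[x;\alpha]$ provides a finite subset $\mathcal{F} \subseteq \mathcal{I}$ with $l.ann_{R[x;\alpha]}(\mathcal{F}) = \{0\}$.

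Next I would examine the coefficients of the polynomials in $\mathcal{F}$. Because $\alpha$ is an automorphism and $I$ is a left ideal, for $b\in R$ and $a\in I$ one has $b\,\alpha^i(a) = \alpha^i\bigl(\alpha^{-i}(b)a\bigr) \in \alpha^i(I)$; consequently, in any element of $\mathcal{I} = R[x;\alpha]I$ the coefficient of $x^i$ lies in $\alpha^i(I)$. Thus every coefficient of every polynomial in $\mathcal{F}$ can be written as $\alpha^i(b)$ for some $b\in I$ and some $i\geq 0$. Let $F$ be the finite subset of $I$ consisting of all such elements $b$ together with $0$, and let $C_{\mathcal{F}}$ denote the set of all coefficients of the polynomials in $\mathcal{F}$ together with $0$, in the notation recalled above. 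By the equality $l.ann_{R[x;\alpha]}(V)\cap R = l.ann_R(C_V)$ applied to $V = \mathcal{F}$, we obtain $l.ann_R(C_{\mathcal{F}}) = l.ann_{R[x;\alpha]}(\mathcal{F})\cap R = \{0\}$.

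It remains to deduce that $l.ann_R(F) = \{0\}$, and this is exactly where $\alpha$-compatibility enters. Iterating the defining equivalence gives $ab = 0 \iff a\,\alpha^i(b) = 0$ for all $a,b\in R$ and all $i\geq 0$. Hence, if $r\in l.ann_R(F)$, then for each coefficient $c = \alpha^i(b)\in C_{\mathcal{F}}$ (with $b\in F$, $i\geq 0$) we have $rb = 0$, and therefore $rc = r\,\alpha^i(b) = 0$; that is, $r\in l.ann_R(C_{\mathcal{F}}) = \{0\}$. Thus $F$ is a finite cofaithful subset of the faithful left ideal $I$, and so $R$ satisfies the left \Beachy condition.

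The delicate point — and the step I expect to require the most care — is the passage through the coefficients: one cannot simply use the set of polynomials with coefficients in $I$, since that need not be a left ideal of $R[x;\alpha]$ unless $\alpha(I)\subseteq I$, so one is forced to work with the generated left ideal $\mathcal{I}$, whose degree-$i$ coefficients live in $\alpha^i(I)$ rather than in $I$. It is precisely $\alpha$-compatibility that lets one collapse the conditions ``$r\,\alpha^i(b) = 0$ for all $i$'' back to ``$rb = 0$'', so that a cofaithful set of coefficients of $\mathcal{F}$ descends to a cofaithful subset of $I$ itself and not merely of $\sum_{i\geq 0}\alpha^i(I)$; this is also the reason the extra hypothesis is needed here, whereas in the zip case one may apply the hypothesis directly to a subset that is not required to be an ideal.
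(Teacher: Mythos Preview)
Your proof is correct and follows essentially the same route as the paper's: lift $I$ to the left ideal $R[x;\alpha]I$, verify it is faithful via $l.ann_{R[x;\alpha]}(I)=R[x;\alpha]\,l.ann_R(I)$, extract a finite cofaithful set of polynomials, pass to their coefficients (which lie in the various $\alpha^i(I)$), and use $\alpha$-compatibility to replace each $\alpha^i(b)$ by $b\in I$. The only cosmetic difference is that the paper argues $l.ann_R(C_{\mathcal F})=\{0\}$ directly by observing that $a\in l.ann_R(C_{\mathcal F})$ forces $af(x)=0$ for every $f\in\mathcal F$, whereas you invoke the identity $l.ann_{R[x;\alpha]}(V)\cap R=l.ann_R(C_V)$; your closing commentary on why $\alpha$-compatibility is indispensable here but not in the zip case is accurate and worth keeping.
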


\begin{proof}
Let $I$ be a left ideal of $R$ such that $l.ann_R(I)=\{0\}$. Then $l.ann_{R[x;\alpha]}(I)=\phi(l.ann_R(I))=\phi(\{0\}) = \{0\}$.
Moreover,
$$l.ann_{R[x;\alpha]}(R[x;\alpha]I)\subseteq l.ann_{R[x;\alpha]}(I)=\{0\}$$
and $R[x;\alpha]I$ is a left ideal of $R[x;\alpha]$. Then, since $R[x;\alpha]$ satisfies the left \Beachy condition, there exists a finite subset
$Y\subseteq R[x;\alpha]I$ such that $l.ann_{R[x;\alpha]}(Y)=\{0\}$.
Let $X = C_Y$ (the subset containing all the coefficients of all polynomials in $Y$). Let $a\in l.ann_R(X)$. Then, for all $f(x)\in Y$, we have that
$af(x) = 0$, so $a\in l.ann_{R[x;\alpha]}(Y)=0$. Therefore, $l.ann_R(X) = \{0\}$.

Note that, since $I$ is a left ideal of $R$, every $g(x)\in R[x;\alpha]I$ is of the form $g(x) = \ds{\sum_{i=0}^n} \alpha^i(a_i) x^i$, where $a_i\in I$.
Assume $X = \{\alpha^{i_1}(a_1),\dots, \alpha^{i_m}(a_m)\}$ for some $X' = \{a_1,\dots, a_m\}\subseteq I$. If $a\in l.ann_R(X')$ then, since $R$ is
$\alpha$-compatible, $a\alpha^{i_k}(a_k) = 0$ for all $1\leq k\leq m$, so $a\in l.ann_R(X) = \{0\}$. Therefore, $l.ann_R(X') = \{0\}$ and $R$ satisfies
the left \Beachy condition.
\end{proof}

Cortes in \cite[Theorem 2.8]{Cortes2} proved that, when the ring $R$ is $\alpha$-skew Armendariz, $R$ is left zip if and only if $R[x;\alpha]$ is left
zip. A similar result is also true for rings with the \Beachy condition.

\begin{Teo}\label{Beachy skew}
\tab Let $R$ be an $\alpha$-skew Armendariz ring with $\alpha$ an automorphism of $R$.

If $R$ satisfies the left \Beachy condition then
$R[x;\alpha]$ satisfies the left \Beachy condition. Moreover, if $R$ is $\alpha$-compatible, then the converse is true.
\end{Teo}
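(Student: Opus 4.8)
The strategy is to exploit the bijection $\phi:\Gamma\to\Delta$ with inverse $\psi$, which holds precisely because $R$ is $\alpha$-skew Armendariz (Cortes, \cite[Lemma 2.7]{Cortes2}). The forward direction transfers a faithful left ideal of $R[x;\alpha]$ down to $R$, uses the left \Beachy condition in $R$, and pushes the finite witnessing set back up; the converse direction is precisely Lemma \ref{lema1}, so only the forward implication requires new work.

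For the forward direction, let $J$ be a faithful left ideal of $R[x;\alpha]$, so $l.ann_{R[x;\alpha]}(J)=\{0\}$. Set $U=C_J\subseteq R$, the set of all coefficients of all polynomials in $J$; then $l.ann_{R[x;\alpha]}(J)\cap R=l.ann_R(U)$ forces $l.ann_R(U)=\{0\}$, i.e. $RU$ is a faithful left ideal of $R$. Applying the left \Beachy condition in $R$, there is a finite $F_0=\{a_1,\dots,a_k\}\subseteq RU$ with $l.ann_R(F_0)=\{0\}$. I would like to replace each $a_t\in RU$ by finitely many coefficients of polynomials in $J$: write $a_t$ as an $R$-combination of coefficients appearing in finitely many polynomials $f_{t,1}(x),\dots,f_{t,s_t}(x)\in J$, collect all these finitely many polynomials into a finite set $Y\subseteq J$, and let $X=C_Y$, a finite subset of $R$. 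Since $F_0\subseteq RX$ (up to the obvious $R$-span) and $l.ann_R(F_0)=\{0\}$, one checks $l.ann_R(X)=\{0\}$; and since $R$ is $\alpha$-skew Armendariz (so in particular the coefficients control annihilation of polynomials), $l.ann_R(X)=\{0\}$ yields $l.ann_{R[x;\alpha]}(Y)\cap R=\{0\}$, hence $\psi(l.ann_{R[x;\alpha]}(Y))=\{0\}=\psi(\{0\})$, and by injectivity of $\psi$ we get $l.ann_{R[x;\alpha]}(Y)=\{0\}$. As $Y$ is a finite subset of $J$, this is exactly the left \Beachy condition for $R[x;\alpha]$.

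The step I expect to be the main obstacle is the passage from ``$l.ann_R(X)=\{0\}$ for the finite coefficient set $X$'' to ``$l.ann_{R[x;\alpha]}(Y)=\{0\}$ for the finite polynomial set $Y$.'' This is where the $\alpha$-skew Armendariz hypothesis is essential and must be used carefully: if $g(x)\in R[x;\alpha]$ kills every $f(x)\in Y$, then $g(x)f(x)=0$ and the Armendariz condition forces each coefficient of $g$ to annihilate (via the appropriate $\alpha$-power) each coefficient of $f$; one then has to juggle the $\alpha$-twists to conclude that the leading (or constant) coefficient of $g$ lies in $l.ann_R(X)=\{0\}$, and iterate down the degree of $g$ to get $g(x)=0$. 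Equivalently, and more cleanly, one argues entirely through the bijection $\psi$: $l.ann_{R[x;\alpha]}(Y)\in\Delta$, and $\psi$ applied to it is $l.ann_{R[x;\alpha]}(Y)\cap R=l.ann_R(C_Y)=l.ann_R(X)=\{0\}$, so by injectivity of $\psi$ on $\Delta$ (valid since $R$ is $\alpha$-skew Armendariz) we conclude $l.ann_{R[x;\alpha]}(Y)=\{0\}$ without reproving the Armendariz bookkeeping. A secondary subtlety is making sure the finite set $Y$ genuinely lives in $J$ and not merely in $R[x;\alpha]J$; because $J$ is already a left ideal, the polynomials realizing the $a_t$ as coefficient-combinations can be taken inside $J$ itself, so no extra $R[x;\alpha]$-multiplication is needed — but this point is worth stating explicitly to avoid the gap.
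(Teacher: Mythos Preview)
Your overall strategy via the bijection $\psi$ is sound, and invoking Lemma~\ref{lema1} for the converse is exactly right. However, there is a real gap at the step ``Since $F_0\subseteq RX$ and $l.ann_R(F_0)=\{0\}$, one checks $l.ann_R(X)=\{0\}$.'' This inference is invalid as stated: if $b\in l.ann_R(X)$ and $a_t=\sum_i r_i x_i$ with $x_i\in X$, then $b a_t=\sum_i (br_i)x_i$, and there is no reason for $br_i$ to lie in $l.ann_R(X)$. Left annihilators are left ideals, not right ideals, so killing $X$ from the left does not force killing $RX$ from the left.

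The paper sidesteps this by first passing to the two-sided ideal $J'=JS$ and proving that $C_{J'}$ is itself an ideal of $R$: closure under addition uses \emph{right} multiplication by powers of $x$ (which shifts coefficients without $\alpha$-twisting), and closure under right $R$-multiplication uses $f(x)\alpha^{-i}(r)$, exploiting that $\alpha$ is an automorphism. Applying the \Beachy condition to the ideal $C_{J'}$ then yields a finite $X\subseteq C_{J'}$ whose elements are already coefficients of polynomials $f_i\in J'$; writing each $f_i=\sum_j f_{i,j}s_{i,j}$ with $f_{i,j}\in J$ pulls the finite witnessing set back into $J$.

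Your route can be repaired without this detour: observe that $C_J$ is already closed under left $R$-multiplication (if $c$ is the $i$-th coefficient of $f\in J$ then $rc$ is the $i$-th coefficient of $rf\in J$). Hence the left ideal $RU$ is just the \emph{additive} closure of $C_J$, and each $a_t\in RU$ is a finite sum $a_t=\sum_l c_{t,l}$ with every $c_{t,l}\in C_J$. Choosing $Y\subseteq J$ so that each $c_{t,l}\in C_Y=X$, you now get $l.ann_R(X)\subseteq l.ann_R(F_0)=\{0\}$ legitimately, since annihilating $X$ certainly annihilates sums of elements of $X$. With this correction your argument goes through and is in fact slightly more direct than the paper's, as it never leaves the left ideal $J$.
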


\begin{proof}
We will denote by $S$ the skew polynomial ring $S = R[x;\alpha]$.
Suppose that $R$ satisfies the left \Beachy condition. Let $J\subseteq S$ be a left ideal of $S$ such that $l.ann_S(J)=\{0\}$. Then, if $J' = JS$ is the
(two-sided) ideal of $S$ generated by $J$, we have that $l.ann_S(J') = \{0\}$.
Then, $l.ann_R(C_{J'})=\psi (l.ann_S(J')) = \psi(\{0\}) = \{0\}$ and $C_{J'}\subseteq R$. We shall see that $C_{J'}$ is an ideal of $R$.
\renewcommand{\labelenumi}{(\arabic{enumi})}
\ben
\item If $s\in C_{J'}$, then there exists $f(x)\in J'$ such that $\ds{f(x) = \sum_{i=0}^n a_ix^i}$ and $s = a_i$ for some $i$.
Let $r\in R$. Since $J'$ is an ideal of $S$, $g(x) = f(x)\alpha^{-i}(r), h(x) = rf(x)\in J'$, and the coefficients of $x^i$ in $g(x)$ and $h(x)$ are
$a_ir$ and $ra_i$ respectively. Therefore $sr, rs\in C_{J'}$.

\item If $r_1,r_2\in C_{J'}$, there exist $f(x),g(x)\in J'$ such that $\ds{f(x) = \sum_{i=0}^n a_ix^i}$, $\ds{g(x) = \sum_{j=0}^m b_jx^j}$ and
$r_1 = a_i$ for some $i$, $r_2 = b_j$ for some $j$. We want to see that $r_1+r_2\in C_{J'}$. Assume without loss of generality that $i\leq j$.
Since $f(x),g(x)\in J'$ and $J'$ is an ideal of $S$ we have that $h(x) = f(x)x^{j-i}+g(x)\in J'$, and the coefficient of $x^j$ in $h(x)$ is $a_i+b_j$,
so $r_1+r_2\in C_{J'}$.
\een

Now, since $R$ satisfies the left \Beachy condition, there exists a finite subset $X\subseteq C_{J'}$ such that $l.ann_R(X)=\{0\}$.
Assume that $X = \{a_1,\dots,a_n\}$, then, for every $a_i$ there exists a polynomial $f_i(x)$ in $J'$ such that $a_i\in C_{f_i}$. Let
$Y = \{f_1(x),\dots,f_n(x)\}\subseteq J'$. Clearly $X\subseteq C_Y$, so $l.ann_R(C_Y)\subseteq l.ann_R(X)=\{0\}$. Now, $l.ann_R(C_Y) = \psi(l.ann_S(Y)) =
\{0\}$, and, since $R$ is $\alpha$-skew Armendariz, by \cite[Lemma 2.7]{Cortes2}, $\psi$ is bijective, so $l.ann_S(Y)=\{0\}$.

By the definition of $J'$, there exist integers $m_1,\dots m_n\geq 0$, and polynomials $f_{i,j}(x)\in J$, $s_{i,j}(x)\in S$ for all $1\leq i\leq n$,
$0\leq j\leq m_i$ such that
$$f_i(x) = \sum_{j=0}^{m_i} f_{i,j}(x) s_{i,j}(x).$$
\tab Let $Y' = \{f_{i,j}(x)\mid 1\leq i\leq n\text{ and } 0\leq j\leq m_i\}\subseteq J$. Clearly, $l.ann_S(Y')\subseteq l.ann_S(Y) = \{0\}$.
Therefore, $S$ satisfies the left \Beachy condition.

If $R$ is $\alpha$-compatible, then the converse follows by Lemma~\ref{lema1}.
\end{proof}

An easy consequence of Theorem~\ref{Beachy skew} is the following result.
\begin{Cor}\label{Corbeachy}
\tab Let $R$ be an Armendariz ring. Then, $R$ satisfies the left \Beachy condition if and only if $R[x]$ satisfies the left \Beachy condition.
\end{Cor}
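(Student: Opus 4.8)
The plan is simply to specialize Theorem~\ref{Beachy skew} to the case in which $\alpha$ is the identity endomorphism $\mathrm{id}_R$ of $R$. First I would observe three things: that $\mathrm{id}_R$ is an automorphism of $R$; that $R[x;\mathrm{id}_R]$, with the sum and product written out in Section~\ref{poli}, is literally the ordinary polynomial ring $R[x]$; and that, by the Definition preceding Theorem~\ref{Beachy skew}, a ring is $\mathrm{id}_R$-skew Armendariz precisely when it is Armendariz. Thus the hypothesis ``$R$ is an $\alpha$-skew Armendariz ring with $\alpha$ an automorphism of $R$'' of Theorem~\ref{Beachy skew} is satisfied for $\alpha=\mathrm{id}_R$ exactly when $R$ is Armendariz.

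Given this, the forward implication is immediate: by Theorem~\ref{Beachy skew}, if $R$ satisfies the left \Beachy condition then so does $R[x;\mathrm{id}_R]=R[x]$. For the converse one needs the additional hypothesis appearing in Theorem~\ref{Beachy skew}, namely that $R$ is $\alpha$-compatible; but for $\alpha=\mathrm{id}_R$ this condition reads ``$ab=0$ if and only if $a\,\mathrm{id}_R(b)=0$'', i.e.\ ``$ab=0$ if and only if $ab=0$'', which holds trivially in every ring. Hence Theorem~\ref{Beachy skew} also gives that if $R[x]$ satisfies the left \Beachy condition then $R$ does, and the equivalence follows.

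There is essentially no obstacle here: the entire content of the argument is the bookkeeping observation that, when $\alpha=\mathrm{id}_R$, each hypothesis of Theorem~\ref{Beachy skew} (automorphism, $\alpha$-skew Armendariz, $\alpha$-compatible) either is part of the assumption that $R$ is Armendariz or is automatic, so that both halves of the stated equivalence are direct instances of the theorem. If one preferred a self-contained proof, one could instead rerun the proof of Theorem~\ref{Beachy skew} with $\alpha$ suppressed throughout, taking $\phi$ and $\psi$ between the annihilator sets of $R$ and of $R[x]$ and invoking Hirano's result (\cite{Hirano}) that $\phi$ is bijective iff $R$ is Armendariz in place of \cite[Lemma 2.7]{Cortes2}; but the one-line specialization is the cleaner route.
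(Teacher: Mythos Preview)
Your proposal is correct and is exactly the approach the paper takes: the corollary is stated as an immediate consequence of Theorem~\ref{Beachy skew}, and you have spelled out precisely the specialization $\alpha=\mathrm{id}_R$ (including the observation that $\mathrm{id}_R$-compatibility is automatic) that makes both implications follow.
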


Although in 1991 Cedó proved that there exist right zip rings $R$ such that $R[x]$ is not right zip, it is still an open problem whether there exists
such an example for the \Beachy condition or not.

\section{Skew power series extensions over rings with the \Beachy condition}\label{series}

\tab In this section we will study the relationship between the \Beachy condition of a ring and its skew power series extension.
Let $R$ be a ring and $\alpha$ be an endomorphism of $R$. The $\alpha$-skew power series extension of $R$, denoted by $R[[x;\alpha]]$, is the ring with
elements of the form $\ds{\sum_{i\geq 0}} a_ix^i$, with $a_i\in R$, and with the multiplication defined by
$$(\sum_{i\geq 0} a_ix^i)(\sum_{j\geq 0} b_jx^j) = \sum_{i\geq 0}\sum_{j\geq 0} a_i\alpha^i(b_j) x^{i+j}$$
and the sum defined by
$$(\sum_{i\geq 0} a_ix^i)+(\sum_{j\geq 0} b_jx^j) = \sum_{i\geq 0} (a_i+b_i) x^i.$$
\tab In particular, $xb = \alpha(b)x$ for all $b\in R$.

First of all, it is important to remind that it remains as an open problem whether or not the \Beachy condition passes to the power series ring in
general. For zip rings, Cedó in \cite[Example 2]{Cedo}, proved that, for any field $\K$, there exists a right zip $\K$-algebra $R$ such that $R[x]$ is
not right zip. We will prove that this example of Cedó also satisfies that $R[[x]]$ is not right zip.

\begin{Ej}\label{ejemplocedo}
\tab For any field $\K$, there exists a right zip $\K$-algebra $R$ such that $R[[x]]$ is not right zip.
\end{Ej}

\begin{proof}
Recall the construction of the example of Cedó \cite[Example 2]{Cedo}.

Let $\K$ be a field. Let $R$ be the $\K$-algebra with set of generators $A = \{a_\infty, a_\lambda, a_{0,n}, a_{1,n}, b_{1,n}, b_{2,n}\mid n\geq 0,
\lambda\in\K\}$ and with relations:
\ben
\renewcommand{\labelenumi}{(\roman{enumi})}
\renewcommand{\theenumi}{(\roman{enumi})}
\item $a_{0,i}b_{1,j} = a_{0,i}b_{2,j} = a_{1,i}b_{1,j}$ for all $j\geq i\geq 0$,
\item $a_{1,i}b_{2,j} = 0$ for all $j\geq i\geq 0$,
\item $a_{1,i}a_{\infty} = (a_{0,i}+\lambda a_{1,i})a_\lambda = 0$ for all $i\geq 0$ and for all $\lambda\in\K$,
\item $a_\infty x = a_\lambda x = b_{k,j}x = 0$ for all $j\geq 0$, for all $\lambda\in\K$, for $k\in\{1, 2\}$ and for all $x\in A$.
\een
It is not hard to verify that the set $U$ of all the products of the form:
\ben
\item $a_{l_1,i_1}\cdots a_{l_n,i_n}$ with $n\geq 0$ and $l_\nu\in\{0,1\}$, $i_\nu\geq 0$ for all $\nu\in\{1, \dots, n\}$,
\item $a_{l_1,i_1}\cdots a_{l_n,i_n}a_\mu$ with $n\geq 0$, $l_\nu\in\{0,1\}$, $i_\nu\geq 0$ for all $\nu\in\{1, \dots, n\}$, $l_n = 0$ and
$\mu\in\K\cup\{\infty\}$,
\item $a_{l_1,i_1}\cdots a_{l_n,i_n}a_0$ with $n>0$, $l_\nu\in\{0,1\}$, $i_\nu\geq 0$ for all $\nu\in\{1, \dots, n\}$ and $l_n = 1$,
\item $a_{l_1,i_1}\cdots a_{l_n,i_n}b_{k,j}$ with $n\geq 0$, $l_\nu\in\{0,1\}$, $i_\nu, j\geq 0$ for all $\nu\in\{1, \dots, n\}$, $k\in\{1, 2\}$ and
if $n > 0$ and $j\geq i_n$ then $l_n = 0$ and $k = 1$,
\een
is a $\K$-basis for $R$.

Let $\alpha\in R$, then $\alpha = \ds{\sum_{u\in U}} \alpha(u)u$, where $\alpha(u)\in \K$ and $\alpha(u) = 0$ for almost all $u\in U$. We define the
support of $\alpha$, $Supp(\alpha)$, to be $Supp(\alpha) = \{u\in U\mid \alpha(u)\neq 0\}$.

Cedó in \cite[Example 2]{Cedo} proved that $R$ is right zip. We shall see that $S = R[[x]]$ is not right zip. Let $X = \{a_{0,i}-a_{1,i}x\mid i\geq 0\}$
and $X_n = \{a_{0,i}-a_{1,i}x\mid n\geq i\geq 0\}$. It is easy to see that, for all $n\geq 0$, $b_{1,n}-b_{2,n}+b_{1,n}x+b_{2,n}x^2\in r.ann_S(X_n)$,
so for every finite subset $F$ of $X$ we have that $r.ann_S(F)\neq 0$. Let us see now that $r.ann_S(X) = \{0\}$.

Suppose that $r.ann_S(X)\neq \{0\}$. Then, there exists $\alpha = \ds{\sum_{i\geq 0}} \alpha_i x^i\in r.ann_S(X)$ such that $\alpha_0\neq 0$.
Since $a_{0,i}\alpha_0 = 0$ for all $i\geq 0$, we have that $\alpha_0 = \alpha_0(a_0)a_0$ (see the proof of \cite[Example 2]{Cedo} for details). Now,
$a_{0,i}\alpha_1 = \alpha_0(a_0)a_{1,i}a_0$, but $a_{1,i}a_0\not\in Supp(a_{0,i}\alpha_1)$, which is a contradiction. Therefore,
$r.ann_S(X) = \{0\}$, so $S$ is not right zip.
\end{proof}

Note that in Example \ref{ejemplocedo}, both $R[x]$ and $R[[x]]$ satisfy the right \Beachy condition. Moreover, in the rest of this section, we
will see that, under certain conditions, the \Beachy condition passes to power series extensions.

\begin{Def}
\tab Let $R$ be a ring and $\alpha$ be an endomorphism of $R$. We say that $R$ is strongly $\alpha$-skew Armendariz if for all
$\ds{f(x)=\sum_{i\geq 0} a_ix^i}$, $\ds{g(x)=\sum_{j\geq 0} b_jx^j}$ $\in R[[x;\alpha]]$ such that $f(x)g(x)=0$ we have that
$a_i\alpha^i(b_j)=0$ for all $i,j\geq 0$. When $\alpha$ denotes the identity of $R$, we say that $R$ is strongly Armendariz.
\end{Def}

It is clear that if a ring is strongly $\alpha$-skew Armendariz, then it is $\alpha$-skew Armendariz, but the converse is not true in general.

\begin{Ej}\label{ejemplo Armendariz}
\tab There exists an Armendariz ring which is not strongly Armendariz.
\end{Ej}

\begin{proof}
Let $\K = \Z_2$ and let $R$ be the $\K$-algebra presented with generators $\{a_i, b_j\mid i,j\geq 0\}$ and with relations
\renewcommand{\labelenumi}{(\alph{enumi})}
\ben
\item $a_ib_0 = a_{i-1}b_1+ a_{i-2}b_2 + \dots + a_0b_i$, for all $i\geq 1$,
\item $a_0b_0 = b_ia_j = a_ia_j = b_ib_j = 0$, for all $i,j\geq 0$.
\een

Let $B_a = \{a_i\mid i\geq 0\}$, $B_b = \{b_j\mid j\geq 0\}$ and $B_2 = \{a_ib_j\mid a_ib_j
\mid i\geq 0, j\geq 1\}$, then it is easy to check that $B = \{1\}\cup B_a\cup B_b\cup B_2$ is a $\K$-basis for $R$.
For all $r\in R$, $r = \ds{\sum_{z\in B}}r(z) z$, where $r(z)\in \K$ and $r(z) = 0$ for almost all $z\in B$. We define the support of $r$,
$Supp(r)$, to be $Supp(r) = \{z\in B\mid r(z)\neq 0\}$.

If we denote by $U_i$ the set of all finite sums of elements of $B_i$, with $i\in \{a,b,2\}$, then $R = U_0\oplus U_a\oplus U_b\oplus U_2$,
with $U_0 = \K$, and every element $r\in R$ can be written as $r = r_0+r_a+r_b+r_2$ with $r_i\in U_i$.

In order to continue the proof, we need the following technical lemma.
\begin{Lem}\label{lema tecnico}
\tab Let $f(x) = \ds{\sum_{i=0}^n} r_ix^i$, $g(x) = \ds{\sum_{j=0}^m} s_jx^j$ $\in R[x]\binv\{0\}$ be such that $f(x)g(x)=0$. Assume that
$r_i = r_{i,0}+r_{i,a}+r_{i,b}+r_{i,2}$ and $s_j = s_{j,0}+s_{j,a}+s_{j,b}+s_{j,2}$ with $r_{i,k},s_{j,k}\in U_k$ for all $i,j\geq 0$ and for all
$k\in\{0,a,b,2\}$. Then, $r_{i,0} = s_{j,0} = 0$ for all $i,j\geq 0$.
\end{Lem}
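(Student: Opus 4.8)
The plan is to analyze the product $f(x)g(x) = 0$ degree by degree, and to exploit the grading $R = U_0 \oplus U_a \oplus U_b \oplus U_2$ together with the multiplication rules (a), (b). First I would record the key multiplicative facts that follow from the relations: $U_0 = \K$ is central, $U_a \cdot U_b \subseteq U_2$ (after reduction using (a)), and all other products among $U_a$, $U_b$, $U_2$ vanish — in particular $U_a^2 = U_b^2 = U_a U_2 = U_b U_2 = U_2 U_2 = \0$ and $U_2 U_a = U_2 U_b = \0$. Also $U_0 \cdot U_k \subseteq U_k$ for each $k$. Thus for two elements $r = r_0 + r_a + r_b + r_2$ and $s = s_0 + s_a + s_b + s_2$, the product decomposes cleanly: the $U_0$-component of $rs$ is $r_0 s_0$; the $U_a$-component is $r_0 s_a + r_a s_0$; the $U_b$-component is $r_0 s_b + r_b s_0$; and the $U_2$-component is $r_0 s_2 + r_2 s_0 + (r_a s_b\text{ reduced via (a)})$.

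Next I would look at the lowest-degree coefficient equation coming from $f(x)g(x) = 0$, namely $r_0 s_0 = 0$ in degree $0$ — but more usefully, project everything onto the $U_0 = \K$ summand. Since the $U_0$-component of a product $r s$ is just $r_0 s_0$, projecting the identity $\sum_{i+j=t} r_i s_j = 0$ onto $U_0$ gives $\sum_{i+j=t} r_{i,0}\, s_{j,0} = 0$ for every $t$. This says precisely that the polynomials $\bar f(x) = \sum_i r_{i,0} x^i$ and $\bar g(x) = \sum_j s_{j,0} x^j$ in $\K[x]$ satisfy $\bar f(x) \bar g(x) = 0$. Since $\K[x]$ is a domain, either $\bar f = 0$ or $\bar g = 0$; that is, either all $r_{i,0} = 0$ or all $s_{j,0} = 0$. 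So one of the two "constant parts" is already annihilated, and the remaining task is to show the other one also vanishes.

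Assume, say, $s_{j,0} = 0$ for all $j$ (the other case is symmetric), so each $s_j \in U_a \oplus U_b \oplus U_2$. I would now argue that $g \ne 0$ forces some genuinely nonzero contribution that, multiplied against the $r_{i,0}$'s, would survive. Concretely: if some $r_{i_0,0} \ne 0$, pick the largest such $i_0$ and the largest $j_0$ with $s_{j_0} \ne 0$; looking at the coefficient of $x^{i_0+j_0}$ and projecting onto the appropriate summand, the surviving top term is $r_{i_0,0} s_{j_0}$ (all other products either land in a different summand or involve a factor that has been shown to vanish, using the top-degree maximality to kill cross terms). Since $r_{i_0,0}$ is a nonzero scalar and $s_{j_0} \ne 0$, we get $r_{i_0,0} s_{j_0} \ne 0$, a contradiction; hence $r_{i,0} = 0$ for all $i$ as well.

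The main obstacle I anticipate is the bookkeeping in that last step: "projecting onto the appropriate summand" has to be done carefully because $s_{j_0}$ may have components in several of $U_a, U_b, U_2$ at once, and one must verify that at least one of these components is not cancelled by contributions from lower-index pairs $(i,j)$ with $i+j = i_0+j_0$. The cleanest route is probably to observe that multiplication by the nonzero scalar $r_{i_0,0}$ is an injective $\K$-linear map on $R$ that preserves each graded piece, then isolate the top coefficient $x^{i_0+j_0}$ and peel off terms using the maximality of $i_0$ (which forces $r_{i,0} = 0$ for $i > i_0$, so those cross terms drop out after projecting onto $U_0 \oplus$ (their images)), reducing to $r_{i_0,0} s_{j_0} = 0$ and hence $s_{j_0} = 0$, the desired contradiction. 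Everything else is routine verification of the multiplication table for the generators.
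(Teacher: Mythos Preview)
Your approach is essentially the same as the paper's: decompose along the grading $R = U_0 \oplus U_a \oplus U_b \oplus U_2$, project the identity $f(x)g(x)=0$ onto each component, and use that $\K[x]$ is a domain to kill first the $U_0$-parts and then (assuming one side's $U_0$-parts are gone) the remaining parts of the other side. The paper argues with the \emph{minimal} index $i_1$ at which $r_{i_1,0}\neq 0$ and treats the component equations in the order $U_0$, $U_a$, $U_b$, $U_2$; you use the maximal index instead, which is equivalent bookkeeping.

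There is one genuine gap in your sketch, precisely at the point you flag as the ``main obstacle'': the single-top-coefficient argument does \emph{not} yield $s_{j_0,2}=0$. After you know $s_{j,0}=0$ for all $j$ and $r_{i,0}=0$ for $i>i_0$, the $U_2$-component of the coefficient of $x^{i_0+j_0}$ in $fg$ still contains terms $r_{i,a}s_{j,b}$ with $i\geq i_0$, and these need not vanish; so you cannot isolate $r_{i_0,0}s_{j_0}$ and conclude $s_{j_0}=0$ in one stroke. Your proposed ``peel off'' fix does not remove these terms, since they already live in $U_2$. The correct fix---and this is exactly what the paper does---is to process the components in order. From the $U_a$- and $U_b$-projections of $fg=0$ one obtains $f^{(0)}(x)\,g^{(a)}(x)=0$ and $f^{(0)}(x)\,g^{(b)}(x)=0$, where $f^{(0)}=\sum_i r_{i,0}x^i\in\K[x]$ and $g^{(a)}=\sum_j s_{j,a}x^j$, $g^{(b)}=\sum_j s_{j,b}x^j$; since $f^{(0)}\neq 0$ and a nonzero element of $\K[x]$ acts injectively on $V[x]$ for any $\K$-vector space $V$, this forces $s_{j,a}=s_{j,b}=0$ for \emph{all} $j$. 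Only then does the cross term $r_{i,a}s_{j,b}$ disappear from the $U_2$-equation, leaving $f^{(0)}(x)\,g^{(2)}(x)=0$ and hence $s_{j,2}=0$ for all $j$, so $g=0$, a contradiction.
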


\par\noindent{\bf Proof of the Lemma. }
We have that $f(x)g(x) = \ds{\sum_{i=0}^n\sum_{j=0}^m}\ r_is_j\ x^{i+j} = 0$. Then:
\renewcommand{\labelenumi}{(\arabic{enumi})}
\ben
\item $\ds{\sum_{i=0}^n\sum_{j=0}^m}\ r_{i,0}s_{j,0}\ x^{i+j} = 0$
\item $\ds{\sum_{i=0}^n\sum_{j=0}^m}\ (r_{i,0}s_{j,a}+r_{i,a}s_{j,0})\ x^{i+j} = 0$
\item $\ds{\sum_{i=0}^n\sum_{j=0}^m}\ (r_{i,0}s_{j,b}+r_{i,b}s_{j,0})\ x^{i+j} = 0$
\item $\ds{\sum_{i=0}^n\sum_{j=0}^m}\ (r_{i,0}s_{j,2}+r_{i,a}s_{j,b}+r_{i,2}s_{j,0})\ x^{i+j} = 0$
\een

Assume that there exists some $i$ such that $r_{i,0}\neq 0$ and $i_1$ is the minimum with this property. If there exists $j$ such that $s_{j,0}\neq 0$,
assuming that $j_1$ is minimum with this property, then, by (1), we have that $r_{i_1+j_1,0}s_{0,0} + \dots + r_{i_1,0}s_{j_1,0} + \dots + r_{0,0}
s_{i_1+j_1,0} = r_{i_1,0}s_{j_1,0} = 0$, but this is a contradiction by the definition of $i_1$ and $j_1$. Therefore, $s_{j,0} = 0$ for all $j\geq 0$.

Now we have that:
\ben
\item[(2)] $\ds{\sum_{i=0}^n\sum_{j=0}^m}\ r_{i,0}s_{j,a}\ x^{i+j} = 0$
\item[(3)] $\ds{\sum_{i=0}^n\sum_{j=0}^m}\ r_{i,0}s_{j,b}\ x^{i+j} = 0$
\item[(4)] $\ds{\sum_{i=0}^n\sum_{j=0}^m}\ (r_{i,0}s_{j,2}+r_{i,a}s_{j,b})\ x^{i+j} = 0$
\een

Assume that there exists $j_2$ such that $s_{j_2,a}\neq 0$ and $j_2$ is the minimum with this property. Then, by (2),
$r_{0,0} s_{i_1+j_2,a} +\dots+ r_{i_1,0}s_{j_2,a} +\dots+ r_{i_1+j_2,0} s_{0,a} = r_{i_1,0}s_{j_2,a} = s_{j_2,a} = 0$, but this is a contradiction by
the definition of $i_1$ and $j_2$. Therefore, $s_{j,a} = 0$ for all $j\geq 0$. Analogously, by (3), we have that $s_{j,b} = 0$ for all $j\geq 0$.

Now we have that:
\ben
\item[(4)] $\ds{\sum_{i=0}^n\sum_{j=0}^m}\ r_{i,0}s_{j,2}\ x^{i+j} = 0$
\een
and, similarly as above, if there exists $j_3\geq 0$ such that $s_{j_3,2}\neq 0$ and $j_3$ is the minimum with this property. Then, by (4),
$r_{0,0} s_{i_1+j_3,2} +\dots+ r_{i_1,0}s_{j_3,2} +\dots+ r_{i_1+j_3,0} s_{0,2} = r_{i_1,0}s_{j_3,2} = s_{j_3,2} = 0$, but this is a contradiction by
the definition of $i_1$ and $j_3$. Therefore, $s_{j,2} = 0$ for all $j\geq 0$, and so, $g(x) = 0$, but this is a contradiction.
Then, $r_{i,0} = 0$ for all $i\geq 0$. Similarly we can see that $s_{j,0} = 0$ for all $j\geq 0$.
This completes the proof of the Lemma. $\cqd$\par\bigskip

% Fin demostración lema.

Now we continue the proof of Example~\ref{ejemplo Armendariz}.

Define $\ds{f'(x) = \sum_{i\geq 0} a_ix^i}$ and $\ds{g'(x)=\sum_{j\geq 0}b_jx^j}\in R[[x]]$. By (a), it is clear that $f'(x)g'(x)=0$.
However, $a_1b_0\neq 0$, and then, $R$ is not strongly Armendariz.

We shall see that $R$ is Armendariz. Let $f(x),g(x)\in R[x]\binv\{0\}$ be such that $f(x)g(x) = 0$. Assume that $f(x)=\ds{\sum_{i=0}^n} r_ix^i$ and
$g(x)=\ds{\sum_{j=0}^m} s_jx^j$. We want to see that $r_is_j = 0$ for all $0\leq i\leq n$ and for all $0\leq j\leq m$.
Assume that $r_i = r_{i,0} + r_{i,a} + r_{i,b} + r_{i,2}$ and $s_j = s_{j,0} + s_{j,a} + s_{j,b} + s_{j,2}$, with $r_{i,k}, s_{j,k}\in U_k$ for all
$i,j\geq 0$ and for all $k\in \{0,a,b,2\}$.

By Lemma~\ref{lema tecnico}, we may assume that $r_{i,0} = s_{j,0} = 0$ for all $0\leq i\leq n$ and for all $0\leq j\leq m$. Then,
$r_is_j = r_{i,a}s_{j,b}$ for all $0\leq i\leq n$ and for all $0\leq j\leq m$, so we may assume that $r_i\in U_a$ and $s_j\in U_b$ for all
$0\leq i\leq n$ and for all $0\leq j\leq m$. Without loss of generality we may also assume that $r_0,r_n,s_0,s_m\neq 0$.

We define the length of an element $r\in R\setminus\{0\}$ by $$l(r) =\max\{l(u)\mid u\in Supp(r)\}$$
where
$\left\{\begin{array}{lll}
   l(a_i) & = &  i\\
   l(b_j) & = &  j\\
   l(a_ib_j) & = & i+j \\
   l(1) & = & -1
  \end{array}\right.$,

and $l(0) = -\infty$.\\

We define the map $\delta$ over elements $r_a\in U_a\setminus\{0\}$, $r_b\in U_b\setminus\{0\}$ and $r_2\in U_2\setminus\{0\}$, by
$\delta(r_a) = a_{l(r_a)}$, $\delta(r_b) = b_{l(r_b)}$ and $\delta(r_2) = a_ib_j$, where $a_ib_j\in Supp(r_2)$ and $j$ is the biggest satisfying
$i+j = l(r_2)$.

We claim that, if $r\in U_a\binv\{0\}$ and $s\in U_b\binv\{0\}$ are such that $rs = 0$, then $r = a_0$ and $s = b_0$. Let us prove this claim.
Assume $r = \ds{\sum_{j=1}^n} a_{i_j}$ and $s = \ds{\sum_{l=1}^m} b_{k_l}$ with $0\leq i_1 < i_2 < \dots < i_n$ and $0\leq k_1 < k_2 < \dots < k_m$.
Assume $a_{i_n}b_{k_m}\neq 0$. Since $rs = 0$, we have to cancel $a_{i_n}b_{k_m}$ with another monomial of the form $a_{i_j}b_{k_l}$, but, since the
relations in (a) preserve the length of the elements of $R$, $l(a_{i_n}b_{k_m}) = i_n+k_m > l(rs-a_{i_n}b_{k_m})$, which is a contradiction.
Therefore, $a_{i_n}b_{k_m} = 0$, so $i_n = k_m = 0$, and then, $r = a_0$ and $s = b_0$.

Now, since $f(x)g(x) = 0$, by defining $r_i = s_j = 0$ for all $i>n$ and $j>m$, we have that $c_k = \ds{\sum_{i=0}^k r_is_{k-i} = 0}$, for all
$k\in\{0,\dots, n+m\}$. In particular $r_0s_0 = 0$, so, by the claim, $r_0 = a_0$ and $s_0 = b_0$.

Let $I_0 = \{0\leq i\leq n\mid r_i\neq 0\}\supseteq\{0,n\}$ and $J_0 = \{0\leq j\leq m\mid s_j\neq 0\}\supseteq\{0,m\}$.
We know that $l(r_i),l(s_j)\geq 0$ for all $i\in I_0$ and for all $j\in J_0$.
Assume $l(r_i) = 0$ for all $i\in I_0$, then $r_i = a_0$ for all $i\in I_0$ and
$$c_k = \ds{\sum_{i=0}^k a_0s_{k-i} = a_0 (\sum_{j=0}^k s_j) = 0}$$
for all $k\geq 0$. By the claim, $\ds{\sum_{j=0}^k} s_j = b_0$ for all $k\geq 0$, so $f(x) = \ds{\sum_{i\in I_0}} a_0 x^i$ and $g(x) = b_0$. Therefore,
$r_is_j = 0$ for all $0\leq i\leq n$ and for all $0\leq j\leq m$. Analogously, if $l(s_j) = 0$ for all $j\in J_0$, we have that $r_is_j = 0$ for all
$0\leq i\leq n$ and for all $0\leq j\leq m$.

Assume now that there exist $i\in I_0\setminus\{0\}$ and $j\in J_0\setminus\{0\}$ such that $l(r_i), l(s_j) > 0$. We shall see that this is impossible.
Let $i_1 > 0$ and $j_1 > 0$ be such that $l(r_i)\leq l(r_{i_1})$ for all $i\in I_0$ and $l(s_j)\leq l(s_{j_1})$ for all $j\in J_0$, and $i_1,j_1$ are
the minimum with this property. Now, since $c_{i_1+j_1} = r_0s_{i_1+j_1}+ \dots + r_{i_1}s_{j_1} + \dots + r_{i_1+j_1}s_0 = 0$ and
$r_{i_1}s_{j_1}\neq 0$, we need to cancel the monomial $\delta(r_{i_1}s_{j_1})$ in this expression.
By the definition of $i_1$ and $j_1$, we have that $l(r_{i_1}s_{j_1}) = l(\delta(r_{i_1}s_{j_1}))\geq l(r_ks_{i_1+j_1-k})$ for all
$1\leq k\leq i_1+j_1$. Assume that there exists $k\neq i_1$ such that $l(r_{i_1}s_{j_1}) = l(r_{i_1})+l(s_{j_1}) = l(r_ks_{i_1+j_1-k}) =
l(r_k) + l(s_{i_1+j_1-k})$. Then, $l(r_{i_1}) = l(r_k)$ and $l(s_{j_1}) = l(s_{i_1+j_1-k})$. By minimality of $i_1$, we
have that $k>i_1$, so $i_1+j_1-k < j_1$, which is a contradiction with the minimality of $j_1$.

Therefore, $R$ is an Armendariz ring.
\end{proof}

Let $\alpha$ be an automorphism of $R$. Let $\Delta^*$ be the set of all left annihilators of $R[[x;\alpha]]$, $\Delta^* = \{l.ann_{R[[x;\alpha]]}(V)
\mid V\subseteq R[[x;\alpha]]\}$ and recall that $\Gamma = \{l.ann_R(U)\mid U\subseteq R\}$.

If $U\subseteq R$, then $R[[x;\alpha]]l.ann_R(U) = l.ann_{R[[x;\alpha]]}(U)$, and, if $V\subseteq R[[x;\alpha]]$, we have that
$l.ann_{R[[x;\alpha]]}(V)\cap R = l.ann_R(C_V)$, where $C_V = \ds{\bigcup_{f(x)\in V}C_f}$ and, if $f(x)=\ds{\sum_{i\geq 0} a_ix^i}$,
$C_f = \{a_0,a_1,\dots,a_n,\dots\}\cup\{0\}$.
Therefore, we can define the maps $\phi^*: \Gamma\to\Delta^*$ by $\phi^*(A) = R[[x;\alpha]]A$ for all $A\in\Gamma$, and $\psi^*:\Delta^*\to\Gamma$ by
$\psi^*(B)=B\cap R$ for all $B\in\Delta^*$. It is easy to see that $\phi^*$ is injective and that $\psi^*$ is surjective. Moreover, $\phi^*$ is
bijective if and only if $\psi^*$ is bijective, and, in this case, one is the inverse of the other. Cortes in \cite[Lemma 2.7]{Cortes2} proved that
$\phi^*$ is bijective if and only if $R$ is strongly $\alpha$-skew Armendariz.

Cortes also proved that, although the zip property is not hereditary, when the skew power series ring of a ring $R$ is left zip, the ring $R$ itself is
left zip (\cite[Theorem 2.8]{Cortes2}). Someone could think that, analogously as we did in Lemma \ref{lema1}, a similar result for rings with the
\Beachy condition can be proven by requiring the ring to be $\alpha$-compatible. However, this is not true, as we will see in Section \ref{ejemplos}
(see Proposition \ref{prob2}).

Again Cortes in \cite[Theorem 2.8]{Cortes2} proved that, when the ring $R$ is strongly $\alpha$-skew Armendariz, $R$ is left zip if and only if
$R[[x;\alpha]]$ is left zip. A similar result is also true for rings with the \Beachy condition.

\begin{Teo}\label{Beachy series}
\tab Let $R$ be a strongly $\alpha$-skew Armendariz ring, with $\alpha$ an automorphism of $R$. If $R$ satisfies the left \Beachy condition,
then $R[[x;\alpha]]$ satisfies the left \Beachy condition.
\end{Teo}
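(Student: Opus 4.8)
The plan is to mirror the proof of Theorem~\ref{Beachy skew}, replacing the skew polynomial ring by the skew power series ring $S = R[[x;\alpha]]$, the map $\psi$ by $\psi^*$, and the hypothesis ``$\alpha$-skew Armendariz'' by ``strongly $\alpha$-skew Armendariz''. So I would start with a left ideal $J$ of $S$ with $l.ann_S(J) = \{0\}$, pass to the two-sided ideal $J'$ of $S$ generated by $J$, so that $l.ann_S(J')\subseteq l.ann_S(J) = \{0\}$, and look at $C_{J'}\subseteq R$. Using the identity $l.ann_S(V)\cap R = l.ann_R(C_V)$ with $V = J'$, we get $l.ann_R(C_{J'}) = l.ann_S(J')\cap R = \{0\}$.

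Next I would check that $C_{J'}$ is a two-sided ideal of $R$, by exactly the coefficient computations used for $R[x;\alpha]$ in the proof of Theorem~\ref{Beachy skew}: if $a_i$ is the coefficient of $x^i$ in some $f(x)\in J'$ and $r\in R$, then $f(x)\alpha^{-i}(r)$ and $rf(x)$ lie in $J'$ and have $a_ir$, respectively $ra_i$, as coefficient of $x^i$; and if $a_i$ is the coefficient of $x^i$ in $f(x)\in J'$ and $b_j$ the coefficient of $x^j$ in $g(x)\in J'$ with $i\leq j$, then $f(x)x^{j-i}+g(x)\in J'$ has $a_i+b_j$ as its coefficient of $x^j$. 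Nothing here is obstructed in the power series setting: multiplication by a scalar, or by the monomial $x^{j-i}$ on the right, acts on coefficients just as in the polynomial case, and $\alpha$ is invertible by hypothesis. Since $R$ satisfies the left \Beachy condition, there is then a finite set $X = \{a_1,\dots,a_n\}\subseteq C_{J'}$ with $l.ann_R(X) = \{0\}$.

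Then, for each $a_k$ I would choose one power series $f_k(x)\in J'$ with $a_k\in C_{f_k}$, and set $Y = \{f_1(x),\dots,f_n(x)\}\subseteq J'$. Since $X\subseteq C_Y$, we get $l.ann_S(Y)\cap R = l.ann_R(C_Y)\subseteq l.ann_R(X) = \{0\}$, that is, $\psi^*(l.ann_S(Y)) = \{0\}$; as $R$ is strongly $\alpha$-skew Armendariz, $\psi^*$ is a bijection by \cite[Lemma 2.7]{Cortes2}, hence $l.ann_S(Y) = \{0\}$. Finally, each $f_k(x)$, being in the two-sided ideal of $S$ generated by $J$, is a finite sum $f_k(x) = \sum_l g_{k,l}(x)h_{k,l}(x)$ with $g_{k,l}(x)\in J$ and $h_{k,l}(x)\in S$; put $Y' = \{g_{k,l}(x) \mid k, l\}$, a finite subset of $J$. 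Every element of $l.ann_S(Y')$ annihilates each $f_k(x)$ on the left, so $l.ann_S(Y')\subseteq l.ann_S(Y) = \{0\}$, and therefore $S = R[[x;\alpha]]$ satisfies the left \Beachy condition.

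The point that needs care — and the reason the hypothesis must be \emph{strongly} $\alpha$-skew Armendariz and not merely $\alpha$-skew Armendariz — is that a power series in $J'$ may have infinitely many nonzero coefficients, so $C_{f_k}$ and $C_Y$ need not be finite. The argument survives only because we first extract the \emph{finite} set $X$ from the \Beachy condition on $R$ and then pick finitely many lifts $f_k(x)$, relying on the inclusion $X\subseteq C_Y$ (together with the identity $l.ann_S(V)\cap R = l.ann_R(C_V)$ for arbitrary, not necessarily finite, $V$) rather than on any equality; and the bijectivity of $\psi^*$, which is what carries $l.ann_R(C_Y) = \{0\}$ back to $l.ann_S(Y) = \{0\}$, is exactly the content of being strongly $\alpha$-skew Armendariz. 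In contrast with Theorem~\ref{Beachy skew}, no converse is asserted here, and in fact it fails even when $R$ is $\alpha$-compatible, as shown by Proposition~\ref{prob2}.
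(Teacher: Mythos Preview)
Your proposal is correct and follows essentially the same argument as the paper: you pass to the two-sided ideal $J' = JS$, show $C_{J'}$ is an ideal of $R$ via the same coefficient manipulations, apply the left \Beachy condition on $R$ to get a finite $X$, lift to finitely many series $Y\subseteq J'$, use bijectivity of $\psi^*$ (from the strongly $\alpha$-skew Armendariz hypothesis, via \cite[Lemma~2.7]{Cortes2}) to conclude $l.ann_S(Y)=\{0\}$, and then descend to a finite $Y'\subseteq J$. Your closing remarks on why the stronger Armendariz hypothesis is needed and on the failure of the converse (Proposition~\ref{prob2}) are also accurate and in line with the paper's discussion.
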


\begin{proof}
Denote by $S$ the skew power series ring, $S = R[[x;\alpha]]$. Suppose that $R$ satisfies the \Beachy condition. Let $J\subseteq S$ be a left ideal of
$S$ such that $l.ann_S(J)=\{0\}$. Then, if $J' = JS$ is the (two-sided) ideal generated by $J$, we have that $l.ann_S(J') = \{0\}$. Then,
$l.ann_R(C_{J'})= \psi^*(l.ann_S(J')) = \psi^*(\{0\}) = \{0\}$ and $C_{J'}\subseteq R$. We shall see that $C_{J'}$ is an ideal of $R$.
\renewcommand{\labelenumi}{(\alph{enumi})}
\ben
\item If $s\in C_{J'}$, then there exists $f(x) = \ds{\sum_{i\geq 0}} a_ix^i\in J'$ such that $s = a_i$ for some $i$.
Let $r\in R$. Since $J'$ is an ideal of $S$, $g(x) = rf(x)$, $h(x) = f(x)\alpha^{-i}(r)\in J'$, and the coefficients of
$x^i$ in $g(x)$ and $h(x)$ are $ra_i$ and $a_ir$ respectively, so $rs, sr\in C_{J'}$.
\item If $r_1,r_2\in C_{J'}$, there exist $f(x),g(x)\in J'$ such that $\ds{f(x) = \sum_{i\geq 0} a_ix^i}$, $\ds{g(x) = \sum_{j\geq 0} b_jx^j}$ and
$r_1 = a_i$, $r_2 = b_j$ for some $i,j\geq 0$. We want to see that $r_1+r_2\in C_{J'}$. Assume without loss of generality that $i\leq j$.
Since $f(x),g(x)\in J'$ and $J'$ is an ideal of $S$, we have that $h(x)=f(x)x^{j-i}+g(x)\in J'$, and the coefficient in $x^j$ of $h(x)$ is $a_i + b_j$,
so $r_1+r_2 \in C_{J'}$.
\een

Now, since $R$ satisfies the left \Beachy condition, there exists a finite subset $X\subseteq C_{J'}$ such that $l.ann_R(X)=\{0\}$.
Assume that $X = \{a_1,\dots,a_n\}$, then, for every $a_i$ there exists a power series $f_i(x)\in J'$ such that $a_i\in C_{f_i}$.
Let $Y = \{f_1(x),\dots,f_n(x)\}\subseteq J'$. Clearly $X\subseteq C_Y$, so $l.ann_R(C_Y)\subseteq l.ann_R(X) = \{0\}$.
Since $R$ is strongly $\alpha$-skew Armendariz and
$$l.ann_R(C_Y) = \psi^*(l.ann_S(Y)) = \{0\},$$
by \cite[Lemma 2.7]{Cortes2}, we have that $l.ann_S(Y) = \{0\}$.

By the definition of $J'$, there exist integers $m_1,\dots, m_n\geq 0$ and power series $f_{i,j}(x)\in J$, $s_{i,j}(x)\in S$ for all $1\leq i\leq n$,
$0\leq j\leq m_i$ such that
$$f_i(x) = \sum_{j=0}^{m_i} f_{i,j}(x) s_{i,j}(x).$$
\tab Let $Y' = \{f_{i,j}(x)\mid 1\leq i\leq n\text{ and } 0\leq j\leq m_i\}\subseteq J$. Clearly, $l.ann_S(Y')\subseteq l.ann_S(Y) = \{0\}$.
Therefore, $S$ satisfies the left \Beachy condition.
\end{proof}

An easy consequence of Theorem \ref{Beachy series} is the following:
\begin{Cor}\label{Corbeachy2}
\tab Let $R$ be a strongly Armendariz ring. If $R$ satisfies the left \Beachy condition then $R[[x]]$ satisfies the left \Beachy condition.
\end{Cor}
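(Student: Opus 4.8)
The plan is to deduce this statement immediately from Theorem~\ref{Beachy series} by taking the automorphism $\alpha$ to be the identity map $\mathrm{id}_R$ of $R$. First I would note that $\mathrm{id}_R$ is indeed an automorphism of $R$, so the structural hypothesis on $\alpha$ in Theorem~\ref{Beachy series} is satisfied. Second, by the definition of strongly $\alpha$-skew Armendariz rings given just before Example~\ref{ejemplo Armendariz}, a ring $R$ is strongly Armendariz precisely when it is strongly $\alpha$-skew Armendariz for $\alpha = \mathrm{id}_R$; hence the hypothesis that $R$ is strongly Armendariz is exactly the hypothesis that $R$ is strongly $\mathrm{id}_R$-skew Armendariz. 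Third, inspecting the multiplication rule $(\sum_{i\geq 0} a_ix^i)(\sum_{j\geq 0} b_jx^j) = \sum_{i\geq 0}\sum_{j\geq 0} a_i\alpha^i(b_j)x^{i+j}$ that defines $R[[x;\alpha]]$, one sees that for $\alpha = \mathrm{id}_R$ this collapses to the usual Cauchy product, so $R[[x;\mathrm{id}_R]] = R[[x]]$ as rings.

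With these three identifications in place, applying Theorem~\ref{Beachy series} to the ring $R$ with $\alpha = \mathrm{id}_R$ gives directly: if $R$ satisfies the left \Beachy condition, then $R[[x;\mathrm{id}_R]] = R[[x]]$ satisfies the left \Beachy condition, which is the desired conclusion. There is essentially no obstacle here; the proof is a one-line specialization of the theorem, and the only point requiring a moment of care is to confirm that each hypothesis of Theorem~\ref{Beachy series} ($\alpha$ an automorphism, $R$ strongly $\alpha$-skew Armendariz) really does reduce to the corresponding hypothesis of the corollary under the substitution $\alpha = \mathrm{id}_R$, which the remarks above settle.
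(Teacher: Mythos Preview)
Your proof is correct and matches the paper's approach exactly: the paper presents this corollary as ``an easy consequence of Theorem~\ref{Beachy series}'' with no further argument, and specializing that theorem to $\alpha = \mathrm{id}_R$ is precisely the intended deduction.
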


Although, by Example \ref{ejemplocedo}, there exists a right zip ring $R$ such that $R[[x]]$ is not right zip, it is still an open problem whether
there exists such an example for the \Beachy condition or not. However, it would be really surprising that this example did not exist.

\section{Examples}\label{ejemplos}
\renewcommand{\labelenumi}{(\alph{enumi})}
\renewcommand{\theenumi}{(\alph{enumi})}

\tab In this section we give a negative answer to some interesting questions about the behavior of the zip property and the \Beachy condition.
These questions are the following:
\ben
\renewcommand{\labelenumi}{(Q\arabic{enumi})}
\renewcommand{\theenumi}{(Q\arabic{enumi})}
\item\label{q1} Let $R$ be a ring. Does $R$ being a left zip ring implies $R$ to be a right zip ring?
\item\label{q2} Let $R$ be a ring and $\alpha$ an automorphism of $R$. Does $R[[x;\alpha]]$ satisfying the left \Beachy condition implies $R$ to satisfy
the left \Beachy condition? What if $R$ is $\alpha$-compatible?
\een

Handelman and Lawrence gave an example of a prime ring in which every (faithful) left ideal is cofaithful but which does not have the analogous
property for right ideals (see \cite[Example 1]{HanLaw}). Therefore, there exist examples of rings satisfying the left \Beachy condition that do not
satisfy the right \Beachy condition. However, for the zip property, there are not explicit examples in the existing literature answering \ref{q1}.
We will see in Proposition \ref{prob1} that the answer to \ref{q1} is negative.

In Lemma~\ref{lema1}, we proved that if the polynomial extension of a ring that is $\alpha$-compatible, satisfies the left \Beachy condition, then
the ring itself satisfies the left \Beachy condition, and Cortes in \cite[Theorem 2.8]{Cortes2} proved that if $R[[x,\alpha]]$ is left zip, then $R$ is
left zip as well. However, the answer to \ref{q2} is negative, as we will see in Proposition \ref{prob2}.

Let $\K$ be a field, $A = \{a_i\mid i\geq 0\}$ and $B = \{b_j\mid j\geq 0\}$. Let $R$ be the $\K$-algebra presented with set of generators $A\cup B$
and with relations:
\ben
\renewcommand{\labelenumi}{(c\arabic{enumi})}
\renewcommand{\theenumi}{(c\arabic{enumi})}
\item\label{rel 1} $b_jb_l = a_ib_j = 0$ for all $i,j,l\geq 0$,
\item\label{rel 2} $b_ja_i = 0$ if and only if $j\geq i$.
\een

We denote by $\langle U\rangle$, with $U\subseteq R$, the multiplicative subsemigroup generated by the elements in $U$. Let $R_A = \K +
\K[\langle A\rangle]$. Note that $R_A\subseteq R$ is an integral domain.

Let $V = \{b_ja_{i_1}\cdots a_{i_n}\mid n\geq 1, i_1,\dots, i_n\geq 0\text{ and } j<i_1\}$. Let $U_a$, $U_b$ and $U_{ba}$ be the $\K$-linear span of
$\langle A\rangle$, $B$ and $V$ respectively. Clearly, $\mathfrak{B} = \{1\}\cup \langle A\rangle\cup B\cup V$ is a $\K$-basis of $R$,
and, for every $r\in R$, there exist unique $r_0\in\K$, $r_a\in U_a$, $r_b\in U_b$ and $r_{ba}\in U_{ba}$ such that $r = r_0+r_a+r_b+r_{ba}$.
For all $r\in R$, $r = \ds{\sum_{z\in \mathfrak{B}}} r(z) z$, where $r(z)\in \K$ and $r(z) = 0$ for almost all $z\in\mathfrak{B}$. We
define the support of $r$, $Supp(r)$, to be $Supp(r) = \{z\in\mathfrak{B}\mid r(z)\neq 0\}$.

Let $S = R[[x]]$. We denote by $U[[x]]$, with $U\subseteq R$, the set of all power series in $S$ with coefficients in $U$. For all $f(x)\in S$, there
exist unique $f_0(x)\in \K[[x]]$, $f_a(x)\in U_a[[x]]$, $f_b(x)\in U_b[[x]]$ and $f_{ba}(x)\in U_{ba}[[x]]$ such that
$f(x) = f_0(x) + f_a(x) + f_b(x) + f_{ba}(x)$. Note that $R_A[[x]]\subseteq R[[x]]$ is an integral domain and
$S = R_A[[x]]\oplus BS$, so, for every $f(x)\in S$, there exist unique $f_{A}(x)\in R_A[[x]]$ and
$f_B(x)\in BS$ such that $f(x) = f_A(x) + f_B(x)$. It is clear that $f_A(x) = f_0(x) + f_a(x)$ and $f_B(x) =  f_b(x) + f_{ba}(x)$.

\begin{Lem}\label{lema apart}
\ben
\item\label{apart 2} Let $r\in U_{ba}$ and $s\in \K\oplus U_a$. If $r,s$ are non-zero then $rs\neq 0$.
\item\label{apart 3} $l.ann_R(A) = \{0\}$.
\item\label{apart 4} Let $r = r_0+r_a+r_b+r_{ba}\in R$, with $r_0\in\K$, $r_a\in U_a$, $r_b\in U_b$ and $r_{ba}\in U_{ba}$. If $r_0$ is non-zero then
$l.ann_R(r) = r.ann_R(r) = \{0\}$.
\een
\end{Lem}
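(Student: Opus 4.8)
The plan is to handle the three items essentially independently, working entirely with the $\K$-basis $\mathfrak{B} = \{1\}\cup\langle A\rangle\cup B\cup V$ and the decomposition $r = r_0+r_a+r_b+r_{ba}$, and exploiting the defining relations (c1), (c2). For part \ref{apart 2}, I would argue by looking at a suitable ``extreme'' monomial. Write $r = \sum_v \lambda_v v$ with $v\in V$, so each $v$ has the shape $b_j a_{i_1}\cdots a_{i_n}$ with $j<i_1$; write $s = \mu_0\cdot 1 + \sum_w \mu_w w$ with $w\in\langle A\rangle$. Since $b_j a_i = 0$ precisely when $j\ge i$ (relation (c2)), the product $v\cdot w$ for $v = b_j a_{i_1}\cdots a_{i_n}\in V$ and $w = a_{k_1}\cdots a_{k_p}\in\langle A\rangle$ is just the (nonzero) basis element $b_j a_{i_1}\cdots a_{i_n}a_{k_1}\cdots a_{k_p}$, which again lies in $V$ because $j<i_1$ is unchanged; and $v\cdot 1 = v$. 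The key point is that the map $V\times(\{1\}\cup\langle A\rangle)\to V$, $(v,w)\mapsto vw$, is ``triangular'' with respect to length: distinct pairs with maximal total length cannot collide. Concretely, pick $v_0\in Supp(r)$ of maximal length and, among those, $w_0\in Supp(s)\cup\{1\}$ of maximal length; then the coefficient of the basis element $v_0w_0$ in $rs$ is $\lambda_{v_0}\mu_{w_0}\ne 0$ (no other product $v w$ from the supports can equal $v_0 w_0$, since that would force $\ell(v)+\ell(w)=\ell(v_0)+\ell(w_0)$ maximal and then a cancellation among equal-length terms, contradicting the maximal choices — this is the same device used in the proof of Example~\ref{ejemplo Armendariz}). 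Hence $rs\ne 0$.

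For part \ref{apart 3}, I would take $r\in l.ann_R(A)$, so $r a_i = 0$ for all $i\ge 0$, and show $r = 0$. Decompose $r = r_0+r_a+r_b+r_{ba}$. Multiplying on the right by $a_i$: by (c1) we have $a_k a_i$ terms surviving (products in $\langle A\rangle$), $b_j a_i = 0$ iff $j\ge i$, and $r_{ba}a_i$ again lands in $U_{ba}$. The four summands land in different basis-pieces ($r_0 a_i + r_a a_i\in\K\oplus U_a$ hits $\langle A\rangle$, $r_b a_i$ hits $B$, $r_{ba}a_i$ hits $V$), so each must vanish separately for every $i$. From $(r_0+r_a)a_i = 0$ for all $i$ and the fact that $\K\oplus U_a\subseteq R_A$ is a domain, we get $r_0 + r_a = 0$. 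From $r_b a_i = 0$ for all $i$: writing $r_b = \sum_j \nu_j b_j$, the relation $b_j a_i = 0\iff j\ge i$ means $r_b a_i = \sum_{j<i}\nu_j (b_j a_i)$, and since $\{b_j a_i\mid j<i\}$ are independent basis elements, letting $i\to\infty$ forces every $\nu_j = 0$, so $r_b = 0$. Finally $r_{ba}a_i = 0$ for all $i$ together with part \ref{apart 2} (taking $s = a_i\in U_a$) forces $r_{ba} = 0$. Hence $r = 0$.

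For part \ref{apart 4}, suppose $r_0\ne 0$. For the left annihilator, let $t\in l.ann_R(r)$, i.e. $tr = 0$; I would write $t = t_0+t_a+t_b+t_{ba}$ and expand $tr = t(r_0+r_a+r_b+r_{ba})$. The term $t r_0$ is just $r_0 t$ (scalar), which is a nonzero scalar multiple of $t$; using that $a_i b_j = 0$ and $b_j b_l = 0$ (relation (c1)), most of the cross terms $t\cdot r_a, t\cdot r_b, t\cdot r_{ba}$ collapse — $t b = 0$ for any $b\in B$ from the right by (c1)-type reasoning, and $t\cdot r_{ba}$, $t\cdot r_a$ can be controlled piece by piece. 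Comparing the coefficient of $1\in\mathfrak{B}$ on both sides of $tr = 0$ gives $t_0 r_0 = 0$, so $t_0 = 0$; then a length/degree argument on the remaining pieces (again in the spirit of part \ref{apart 2}) forces $t_a = t_b = t_{ba} = 0$. The right annihilator $r.ann_R(r) = \{0\}$ is handled symmetrically, now using $r\cdot t$ and the relations from the other side. I expect part \ref{apart 4} to be the main obstacle, because here both $r$ and the annihilating element are fully general elements of $R$ (all four components present), so the bookkeeping of which basis pieces the various products $t_k r_l$ land in, and checking that the leading (or lowest-length) terms genuinely survive, is the delicate part; parts \ref{apart 2} and \ref{apart 3} are comparatively clean once the length-triangularity observation is in place.
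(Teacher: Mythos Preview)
Your overall strategy matches the paper's in all three parts: a maximal-degree monomial argument for \ref{apart 2}, componentwise separation of $ra_i$ for \ref{apart 3}, and basis-piece bookkeeping plus degree considerations for \ref{apart 4}. There is, however, one concrete slip in your treatment of \ref{apart 3}. You claim that ``$r_b a_i$ hits $B$'' while ``$r_{ba}a_i$ hits $V$'', so that the four summands land in different pieces of the decomposition and vanish separately. This is false: for $b_j\in B$ one has $b_j a_i\in V$ whenever $j<i$ (and $b_j a_i=0$ otherwise), so $r_b a_i\in U_{ba}$, not $U_b$. Thus $r_b a_i$ and $r_{ba}a_i$ both lie in $U_{ba}$, and the coarse decomposition $\K\oplus U_a\oplus U_b\oplus U_{ba}$ alone does not separate them.

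The paper's fix is exactly the kind of degree argument you invoke elsewhere: from $(r_b+r_{ba})a_i=0$, note that every monomial in $Supp(r_b a_i)$ has $A$-degree exactly~$1$, while every monomial in $Supp(r_{ba}a_i)$ has $A$-degree at least~$2$; since the defining relations preserve $A$-degree, these cannot cancel, and one concludes $r_b a_i=r_{ba}a_i=0$ for each $i$. From there your argument resumes correctly: $r_{ba}a_0=0$ with $a_0\neq 0$ forces $r_{ba}=0$ by part~\ref{apart 2}, and $r_b a_i=0$ for all $i$ forces every coefficient $\nu_j$ of $r_b$ to vanish (the paper does this by choosing $i=j_1+1$ where $j_1$ is the minimal index in $Supp(r_b)$, which is equivalent to your ``let $i\to\infty$'' argument). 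With this correction in place, your sketch of \ref{apart 4} is also in line with the paper's proof; in particular, the paper handles $s_{ba}(r_0+r_a)=0$ by the same $A$-degree comparison (one could equally well cite part~\ref{apart 2} directly there, since $r_0+r_a\in\K\oplus U_a$ is nonzero).
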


\begin{proof}
\ben
\item Let $r\in U_{ba}\binv\{0\}$ and $s\in \K\oplus U_a\binv\{0\}$. Then, there exist $w_1 = b_ja_{i_1}\cdots a_{i_n}\in Supp(r)$ and
$w_2 = a_{k_1}\cdots a_{k_m}\in Supp(s)$ such that the total degree in all the generators in $A$ is maximum, or $w_2 = 1$ if $s\in \K$. It is clear
that if $s\in \K\binv\0$, then $rs\neq 0$, so we may assume that $s\not\in \K$.

Assume $rs = 0$. Since $b_ja_{i_1}\cdots a_{i_n}a_{k_1}\cdots a_{k_m}\neq 0$, there exist $w'_1 = b_ja_{i'_1}\cdots a_{i'_{n'}}\in Supp(r)$ and
$w'_2\in Supp(s)$ such that
$$b_ja_{i_1}\cdots a_{i_n}a_{k_1}\cdots a_{k_m} = b_ja_{i'_1}\cdots a_{i'_{n'}}w'_2$$
and $(w_1,w_2)\neq(w'_1,w'_2)$.

By the choice of $w_1$ and $w_2$ we have that $n\geq n'$ and $m\geq deg_A(w'_2)$, where $deg_A(-)$ denotes the total degree in all the generators in
$A$. Thus, $n = n'$ and $deg_A(w'_2) = m$, so $i_p = i'_p$ and $w'_2 = w_2$ for all $1\leq p\leq n$, and then $(w_1,w_2) = (w'_1, w'_2)$, which is a
contradiction. Therefore, $rs\neq 0$.

\item Let $r\in l.ann_R(A)$. Assume $r = r_0+r_a+r_b+r_{ba}$ with $r_0\in\K$, $r_a\in U_a$, $r_b\in U_b$ and $r_{ba}\in U_{ba}$.
We have that $ra_i = r_0a_i + r_aa_i + r_ba_i + r_{ba}a_i = 0$ for all $i\geq 0$, so $(r_0 + r_a)a_i = 0$ and $(r_b + r_{ba})a_i = 0$ for all $i\geq 0$.
Since $R_A$ is an integral domain, we have that $r_0 + r_a = 0$, so $r_0 = r_a = 0$.

Now we have that $r_ba_i = -r_{ba}a_i$ for all $i\geq 0$. Suppose that $r_ba_i\neq 0$ for some $i\geq 0$. Then, there exist $b_la_i\in Supp(r_ba_i)$
and $wa_i \in Supp(r_{ba}a_i)$, with $w\in Supp(r_{ba})$, such that $b_la_i = wa_i\neq 0$, but this is impossible since $deg_A(b_la_i) = 1$ and
$deg_A(wa_i)\geq 2$, and the defining relations \ref{rel 1} and \ref{rel 2} preserve degrees in all the generators in $A$. Therefore,
$r_ba_i = r_{ba}a_i = 0$ for all $i\geq 0$. By \ref{apart 2}, since $r_{ba}a_0 = 0$ and $a_0\neq 0$, we have that $r_{ba} = 0$. Finally, we shall
see that $r_b = 0$.

Suppose $r_b\neq 0$ and let $r_b = \ds{\sum_{i=1}^n} \lambda_ib_{j_i}$ with $\lambda_i\in\K\binv\{0\}$ and $j_1<j_2<\dots <j_n$. Then, by \ref{rel 2},
$r_ba_{j_1+1} = \lambda_1 b_{j_1}a_{j_1+1}\neq 0$, which is a contradiction. Therefore, $r_b = 0$ and then $l.ann_R(A) = \{0\}$.

\item Let $r = r_0+r_a+r_b+r_{ba}\in R$ with $r_0\in\K$, $r_a\in U_a$, $r_b\in U_b$ and $r_{ba}\in U_{ba}$ be such that $r_0\neq 0$.
Let $s = s_0+s_a+s_b+s_{ba}\in l.ann_R(r)$ with $s_0\in\K$, $s_a\in U_a$, $s_b\in U_b$ and $s_{ba}\in U_{ba}$. Then, since $sr = 0$, we have that
$s_0r_0 = 0$, so $s_0 = 0$, and, by \ref{rel 1} and \ref{rel 2}, $sr = s_ar_0 + s_ar_a + s_br_0 + s_br_a + s_{ba}r_0 + s_{ba}r_a$. Now we have that
$s_ar_a + s_ar_0 = 0$, $s_br_0 = 0$, so $s_b = 0$, and $s_{ba}r_0 + s_{ba}r_a = 0$.

We have that $s_a(r_0+r_a) = 0$ so, since $R_A$ is an integral domain and $(r_0+r_a)\neq 0$, then $s_a = 0$. We also have that
$s_{ba}r_0 = -s_{ba}r_a$. If $s_{ba}\neq 0$, then $r_a\neq 0$ and $deg_A(s_{ba}r_0) = deg_A(s_{ba}) = deg_A(s_{ba}r_a) = deg_A(s_{ba})+deg_A(r_a) >
deg_A(s_{ba})$, which is a contradiction. Therefore $s = s_{ba} = 0$, so
$l.ann_R(r) = \{0\}$.

Let $s = s_0+s_a+s_b+s_{ba}\in r.ann_R(r)$ with $s_0\in\K$, $s_a\in U_a$, $s_b\in U_b$ and $s_{ba}\in U_{ba}$. Then, since $rs = 0$ we have that
$r_0s_0 = 0$, so $s_0 = 0$, and, by \ref{rel 1}, $rs = r_0s_a + r_as_a + r_0s_b + r_0s_{ba} + r_bs_a + r_{ba}s_a$. Now we have that $(r_0 + r_a)s_a = 0$,
$r_0s_b = 0$, so $s_b = 0$, and $r_0s_{ba} + r_bs_a + r_{ba}s_a = 0$.

Since $(r_0+r_a)s_a = 0$, $R_A$ is an integral domain and $(r_0+r_a)\neq 0$, we have that $s_a = 0$. Finally, we have that
$r_0s_{ba} = 0$, so $s_{ba} = 0$ and then, $s = 0$. Therefore, $r.ann_R(r) = \{0\}$.
\een
\end{proof}

\begin{Lem}\label{apart 5}
\tab Let $r_1,\dots,r_n\in R\binv\{0\}$ and $s\in U_a\binv\{0\}$. Then, $\ds{\sum_{i=1}^n}r_ia_is = 0$ if and only if $r_ia_is = 0$ for all
$1\leq i\leq n$.
\end{Lem}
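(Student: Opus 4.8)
The ``if'' direction is trivial, so the real content is the converse. Assume $\sum_{i=1}^n r_ia_is = 0$. The plan is to use the decomposition $R = R_A\oplus BR$ (with $R_A = \K\oplus U_a$ a free $\K$-algebra and $BR = U_b\oplus U_{ba}$ a right ideal, exactly as the decomposition $S = R_A[[x]]\oplus BS$ used for the power series ring), split the hypothesis into its $R_A$-part and its $BR$-part, and then in each part isolate the $i$-th summand by reading off the last generator occurring in each monomial.

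Concretely, write $r_i = r_{i,A}+r_{i,B}$ with $r_{i,A} = r_{i,0}+r_{i,a}\in R_A$ and $r_{i,B} = r_{i,b}+r_{i,ba}\in BR$. Since $R_A$ is a subring containing both $a_i$ and $s$, we have $r_{i,A}a_is\in R_A$; since $BR$ is a right ideal, $r_{i,B}a_is\in BR$; and using relations \ref{rel 1}, \ref{rel 2} together with the fact that $V$ is closed under right multiplication by generators in $A$, one checks that $r_{i,B}a_i$ already lies in $U_{ba}$ (indeed $r_{i,b}a_i = \sum_{j<i}\lambda_j\,b_ja_i$, and $r_{i,ba}a_i$ is a $\K$-combination of elements $b_ja_{i_1}\cdots a_{i_m}a_i$, both in the span of $V$). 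As $R_A\cap BR = \{0\}$, the identity $\sum_i r_ia_is = 0$ breaks into $\sum_i r_{i,A}a_is = 0$ in $R_A$ and $\sum_i r_{i,B}a_is = 0$ in $BR$. For the first equation, $(\sum_i r_{i,A}a_i)s = 0$ with $s\neq 0$ in the integral domain $R_A$ forces $\sum_i r_{i,A}a_i = 0$; now every monomial occurring in $r_{i,A}a_i$ is a word in the free semigroup $\langle A\rangle$ ending in the letter $a_i$, and since $a_1,\dots,a_n$ are distinct letters, monomials coming from different summands cannot cancel each other, so $r_{i,A}a_i = 0$ for each $i$, whence $r_{i,A}a_is = 0$.

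For the second equation the integral-domain trick is not available, so I would put $w = \sum_i r_{i,B}a_i\in U_{ba}$ and note that $ws = \sum_i r_{i,B}a_is = 0$; since $s\in(\K\oplus U_a)\binv\{0\}$, part~(\ref{apart 2}) of Lemma~\ref{lema apart} forces $w = 0$, i.e.\ $\sum_i r_{i,B}a_i = 0$. Again every monomial of $r_{i,B}a_i$ is a word of the form $b_ja_i$ (with $j<i$) or $b_ja_{i_1}\cdots a_{i_m}a_i$, hence ends in the letter $a_i$, so the same no-cancellation argument yields $r_{i,B}a_i = 0$ and therefore $r_{i,B}a_is = 0$. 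Adding the two parts, $r_ia_is = r_{i,A}a_is+r_{i,B}a_is = 0$ for every $i$; in fact this shows the stronger statement $r_ia_i = 0$.

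The step I expect to require the most care is the $BR$-part: one cannot simply cancel $s$ by an integral-domain argument, so one must first establish that $\sum_i r_{i,B}a_i$ is a $\K$-linear combination of basis elements in $V$, precisely so that the cancellation of $s$ can be routed through part~(\ref{apart 2}) of Lemma~\ref{lema apart}. The remaining ingredients---the direct sum $R = R_A\oplus BR$, the fact that $BR$ is a right ideal, and the bookkeeping that every monomial of $r_{i,A}a_i$ and of $r_{i,B}a_i$ really does end in the generator $a_i$---are routine verifications with the monomial basis $\mathfrak{B}$.
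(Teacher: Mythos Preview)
Your argument is correct and takes a genuinely different route from the paper's. The paper proceeds by a direct maximal-degree argument: assuming $\sum_i r_ia_is=0$, it picks a monomial $w_{i_0}\in Supp(r_{i_0})$ of largest $A$-degree and a monomial $u\in Supp(s)$ of largest $A$-degree, and shows that $w_{i_0}a_{i_0}u$ cannot be cancelled by any other term; this forces every $r_i$ to lie in $\K\oplus U_b$, and a second pass of the same kind rules out the constant part, so in fact $r_i\in U_b$; a third maximal-degree argument then yields $r_ia_is=0$. You instead exploit the decomposition $R=R_A\oplus BR$ structurally: the $R_A$-component of $\sum_i r_ia_is$ is handled by the domain property of $R_A$, and for the $BR$-component you first observe $\sum_i r_{i,B}a_i\in U_{ba}$ so that Lemma~\ref{lema apart}\ref{apart 2} cancels $s$; in both components the ``last letter is $a_i$'' bookkeeping separates the summands. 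Your approach is cleaner in that it reuses Lemma~\ref{lema apart}\ref{apart 2} rather than repeating ad hoc extremal arguments, and it even gives the slightly stronger conclusion $r_ia_i=0$ for every $i$ (from which $r_{i,A}=0$ follows immediately since $R_A$ is a domain). The paper's approach, on the other hand, makes the intermediate structural fact $r_i\in U_b$ explicit along the way, which is not visible in your write-up although it is implicit in $r_ia_i=0$.
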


\begin{proof}
Let $r_1,\dots,r_n\in R\binv\{0\}$ and $s\in U_a\binv\{0\}$ be such that $\ds{\sum_{i=1}^n}r_ia_is = 0$.

Suppose that there exists $1\leq i\leq n$ such that $deg_A(r_i)>0$. For each $1\leq i\leq n$, let $w_i\in Supp(r_i)$ be such that $deg_A(w_i)$ is
maximum. Let $i_0$ be such that $deg_A(w_{i_0}) = \max\{deg_A(w_i)\mid 1\leq i\leq n\}$. Let $u\in Supp(s)$ be such that $deg_A(u)$ is maximum.
Then, $0\neq w_{i_0}a_{i_0}u\in Supp(r_{i_0}a_{i_0}s)$. Since $\ds{\sum_{i=1}^n}r_ia_is = 0$, there exist $w\in \ds{\bigcup_{i = 1}^n} Supp(r_i)$,
$u'\in Supp(s)$ and $1\leq i_1\leq n$ such that $(w,i_1,u')\neq (w_{i_0},i_0,u)$ and $wa_{i_1}u' = w_{i_0}a_{i_0}u\neq 0$. Then,
$deg_A(wa_{i_1}u') = deg_A(w)+deg_A(u')+1 = deg_A(w_{i_0}a_{i_0}u) = deg_A(w_{i_0})+deg_A(u)+1$, so, by the definition of $w_{i_0}$ and $u$, we have
that $deg_A(w) = deg_A(w_{i_0})$ and $deg_A(u) = deg_A(u')$. Thus, $w = w_{i_0}$, $u = u'$ and $i_1 = i_0$, which is a contradiction.
Therefore, for all $1\leq i\leq n$, $deg_A(r_i) = 0$, so $r_i\in U_b\oplus \K$, $r_i\neq 0$, for all $1\leq i\leq n$.
Similarly, it is easy to see that $1\not\in Supp(r_i)$ for all $1\leq i\leq n$.

Then we have $r_i\in U_b$ for all $1\leq i\leq n$. Let $u\in Supp(s)$ be such that $deg_A(u)$ is maximum. Assume that there exists $i$ such that
$r_ia_is\neq 0$. Then, there exists $b_j\in Supp(r_i)$ such that $b_ja_iu\in Supp(r_ia_is)$. Since $\ds{\sum_{i=1}^n}r_ia_is = 0$, there exist
$1\leq i_1\leq n$, $b_l\in Supp(r_{i_1})$ and $u'\in Supp(s)$ such that $(j,i,u)\neq (l,i_1,u')$ and $b_ja_iu = b_la_{i_1}u'\neq 0$, but this is
impossible. Therefore, $r_ia_is = 0$ for all $1\leq i\leq n$.
\end{proof}

\begin{Lem}\label{apart 6}
\tab Let $f(x), g(x)\in R[[x]]\binv\{0\}$. If $f(x)g(x) = 0$ then $f_0(x) = g_0(x) = 0$, where $f(x) = f_0(x) + f_a(x) + f_b(x) + f_{ba}(x)$,
$g(x) = g_0(x) + g_a(x) + g_b(x) + g_{ba}(x)$, with $f_0(x),g_0(x)\in\K[[x]]$, $f_a(x),g_a(x)\in U_a[[x]]$, $f_b(x),g_b(x)\in U_b[[x]]$ and
$f_{ba}(x),g_{ba}(x)\in U_{ba}[[x]]$.
\end{Lem}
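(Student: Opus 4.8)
The plan is to work with the direct-sum decomposition $S=R[[x]]=R_A[[x]]\oplus BS$, using that $R_A[[x]]$ is an integral domain and the first part of Lemma~\ref{lema apart}. Write $f(x)=f_A(x)+f_B(x)$ and $g(x)=g_A(x)+g_B(x)$ with $f_A,g_A\in R_A[[x]]$ and $f_B,g_B\in BS$ (so $f_A=f_0+f_a$, $f_B=f_b+f_{ba}$, and likewise for $g$). The first and principal step is to compute $f(x)g(x)$ relative to this decomposition. Going through the sixteen products $f_\star(x)g_{\star'}(x)$ with $\star,\star'\in\{0,a,b,ba\}$ and using only the defining relations \ref{rel 1} and \ref{rel 2}, one checks that $f_a(x)g_B(x)=0$ (every element of $\langle A\rangle$ annihilates $B\cup V$ on the right), that $f_B(x)g_B(x)=0$, and that $f_B(x)g_A(x)\in BS$ with $U_b[[x]]$-component exactly $f_b(x)g_0(x)$. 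Hence
$$f(x)g(x)=f_A(x)g_A(x)+\bigl(f_0(x)g_B(x)+f_B(x)g_A(x)\bigr),$$
the first summand lying in $R_A[[x]]$ and the second in $BS$; since $f(x)g(x)=0$ and the sum is direct, $f_A(x)g_A(x)=0$ and $f_0(x)g_B(x)+f_B(x)g_A(x)=0$.

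Next, because $R_A[[x]]$ is a domain, either $f_A(x)=0$ or $g_A(x)=0$, and I would split into these two cases. If $g_A(x)=0$, then $g_0(x)=0$ and, as $g(x)\neq0$, $g_B(x)\neq0$; the second identity becomes $f_0(x)g_B(x)=0$, and comparing the coefficients of the lowest power of $x$ present in each factor (in $R[[x]]$ the lowest-degree coefficient of a product is the product of the lowest-degree coefficients) forces a nonzero scalar times a nonzero element of $U_b\oplus U_{ba}$ to vanish unless $f_0(x)=0$; so $f_0(x)=0$. If instead $f_A(x)=0$, then $f_0(x)=0$ and $f_B(x)=f(x)\neq0$, and the second identity becomes $f_B(x)g_A(x)=0$; taking $U_b[[x]]$-components gives $f_b(x)g_0(x)=0$, so by the same lowest-degree comparison $f_b(x)=0$ or $g_0(x)=0$. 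If $g_0(x)=0$ we are finished; if $f_b(x)=0$ then $f_B(x)=f_{ba}(x)\neq0$ and $f_{ba}(x)g_A(x)=0$, and comparing lowest-degree coefficients one final time yields $rs=0$ with $r\in U_{ba}$ the nonzero leading coefficient of $f_{ba}(x)$ and $s\in\K\oplus U_a$ the nonzero leading coefficient of $g_A(x)$, contradicting the first part of Lemma~\ref{lema apart}; hence $g_A(x)=0$, so $g_0(x)=0$.

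The only delicate point is the opening computation: one must be careful with the bookkeeping of all the products of the four coefficient types ($\K$, $U_a$, $U_b$, $U_{ba}$) and verify that modulo $BS$ the product is literally $f_A(x)g_A(x)$ while the $U_b[[x]]$-part of the leftover piece is $f_b(x)g_0(x)$. Once that is settled, the rest reduces to the elementary fact about lowest-degree coefficients of power series over $R$ together with the two structural inputs: $R_A[[x]]$ is a domain, and a nonzero element of $U_{ba}$ times a nonzero element of $\K\oplus U_a$ is nonzero.
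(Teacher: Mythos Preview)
Your proposal is correct and follows essentially the same route as the paper: both arguments use the decomposition $S=R_A[[x]]\oplus BS$, the fact that $R_A[[x]]$ is a domain to force $f_A(x)g_A(x)=0$, and then lowest-degree-coefficient comparisons together with part~\ref{apart 2} of Lemma~\ref{lema apart} to handle the remaining $BS$-piece. The only cosmetic difference is that you front-load the product computation $f(x)g(x)=f_A(x)g_A(x)+\bigl(f_0(x)g_B(x)+f_B(x)g_A(x)\bigr)$ and then split on whether $g_A=0$ or $f_A=0$, whereas the paper recomputes within each case and splits on whether $f_0\neq0$ or $g_0\neq0$; the logical content is identical.
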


\begin{proof}
Let $f(x) = f_0(x) + f_a(x) + f_b(x) + f_{ba}(x)$, $g(x) = g_0(x) + g_a(x) + g_b(x) + g_{ba}(x)\in R[[x]]\binv\{0\}$ be such that $f(x)g(x) = 0$.
Let $f_A(x) = f_0(x) + f_a(x)$, $g_A(x) = g_0(x) + g_a(x)$ and $f_B(x) = f_{b}(x) + f_{ba}(x)$, $g_B(x) = g_b(x) + g_{ba}(x)$.
Then, we have that $f_0(x)g_0(x) = 0$ and $f_A(x)g_A(x) = 0$. Since $\K[[x]]$ and $R_A[[x]]$ are integral domains, we have that either
$f_A(x) = 0$ or $g_A(x) = 0$ and either $f_0(x) = 0$ or $g_0(x) = 0$.

Assume $f_0(x)\neq 0$. Then, $g_A(x) = 0$ and $f(x)g(x) = f_0(x)g_B(x) = 0$. Let $f_0(x) = \ds{\sum_{i\geq i_0}}\varepsilon_i x^i$, with
$\varepsilon_i\in \K$ for all $i\geq i_0$ and $\varepsilon_{i_0}\neq 0$. Since $g(x) = g_B(x)\neq 0$, we have that $g_B(x) = \ds{\sum_{j\geq j_0}}
r_j x^j$, with $r_j\in BR$ for all $j\geq j_0$ and $r_{j_0}\neq 0$. The coefficient of $x^{i_0+j_0}$ in $f_0(x)g_B(x)$ is $\varepsilon_{i_0}r_{j_0}
\neq 0$, which is a contradiction. Therefore, $f_0(x) = 0$.

Assume now that $g_0(x)\neq 0$. Then, $f_A(x) = 0$ and $f(x)g(x) = f_B(x)g_A(x) = f_B(x)g_0(x) + f_B(x)g_a(x)$. Therefore, $f_b(x)g_0(x) = 0$ and
$$f_{ba}(x)g_0(x) = -f_B(x)g_a(x).$$
Let $g_0(x) = \ds{\sum_{i\geq i_0}}\varepsilon_i x^i$, with $\varepsilon_i\in \K$ for all $i\geq i_0$ and
$\varepsilon_{i_0}\neq 0$. If $f_b(x)\neq 0$, then $f_b(x) = \ds{\sum_{j\geq j_0}} r_j x^j$ for some $r_j\in U_b$ for all $j\geq j_0$ and
$r_{j_0}\neq 0$. Now, the coefficient of $x^{i_0+j_0}$ in $f_b(x)g_0(x)$ is $r_{j_0}\varepsilon_{i_0}\neq 0$, which is a contradiction.
Therefore, $f_b(x) = 0$ and $f_{ba}(x)g_A(x) = 0$. Since $f(x) = f_{ba}(x)\neq 0$, $f_{ba}(x) = \ds{\sum_{j\geq j_1}} s_j x^j$, with $s_j\in U_{ba}$
for all $j\geq j_1$ and $s_{j_1}\neq 0$.
Since $g_A(x)\neq 0$, $g_A(x) = \ds{\sum_{i\geq i_1}} t_ix^i$, with $t_i\in\K \oplus U_a$ and $t_{i_1}\neq 0$. Now the coefficient of $x^{i_1+j_1}$ in
$f_{ba}(x)g_A(x)$ is $s_{j_1}t_{i_1} = 0$, which is a contradiction to \ref{apart 2} in Lemma \ref{lema apart}. Therefore, $g_0(x) = f_0(x) = 0$.
\end{proof}

\begin{Pro}\label{prob2}
  \tab The ring $R$ does not satisfy the left \Beachy condition. However, $R[[x]]$ satisfies the left \Beachy condition.
\end{Pro}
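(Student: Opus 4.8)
The plan is to treat the two assertions separately: the first by exhibiting an explicit faithful left ideal of $R$ that is not cofaithful, the second by showing that every faithful left ideal of $R[[x]]$ already contains a \emph{single} element whose left annihilator is trivial. For the first claim I would take $I=\sum_{i\ge 0}Ra_i$, the left ideal generated by $A$. Since $a_i\in I$ for all $i$, the fact $l.ann_R(A)=\{0\}$ from Lemma~\ref{lema apart} gives $l.ann_R(I)\subseteq l.ann_R(A)=\{0\}$, so $I$ is faithful. On the other hand every $f\in I$ satisfies $Supp(f)\subseteq\langle A\rangle\cup V$, because each summand $r_ka_{i_k}$ lands in $U_a\oplus U_{ba}$. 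Hence, given a finite $F\subseteq I$, I would choose $M$ larger than every index $i$ of a generator $a_i$ occurring in the support of an element of $F$; then $b_Mu=0$ for every monomial $u$ in any such support — if $u=a_{i_1}\cdots a_{i_n}\in\langle A\rangle$ then $b_Ma_{i_1}=0$ by \ref{rel 2} since $M\ge i_1$, and if $u=b_ja_{i_1}\cdots a_{i_n}\in V$ then $b_Mb_j=0$ by \ref{rel 1}. So $0\ne b_M\in l.ann_R(F)$, and $I$ is faithful but not cofaithful; thus $R$ fails the left \Beachy\ condition.

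For the second claim, write $BR=U_b\oplus U_{ba}$. This is a two-sided ideal of $R$, complemented by the multiplicatively closed $R_A$, so in $S=R[[x]]$ the set $BS:=(BR)[[x]]=U_b[[x]]\oplus U_{ba}[[x]]$ is a two-sided ideal, $S=R_A[[x]]\oplus BS$, the projection $\pi\colon S\to D:=R_A[[x]]$ is a ring epimorphism onto a domain, and $U_a[[x]]\cdot BS=0$ (while $BR\cdot BR=0$). Let $J$ be a left ideal of $S$ with $l.ann_S(J)=\{0\}$. If some $g\in J$ has $g_0(x)\ne 0$ (nonzero $\K[[x]]$-component), then Lemma~\ref{apart 6} gives $l.ann_S(g)=\{0\}$ at once and $F=\{g\}$ works. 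Otherwise $g_0(x)=0$ for all $g\in J$, so $\pi(g)=(g)_a\in U_a[[x]]$ and $L:=\pi(J)$ is a left ideal of $D$ contained in $U_a[[x]]$; moreover $L\ne\{0\}$, since $L=0$ would force $J\subseteq BS$ and hence $b_0\in l.ann_S(J)$.

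Now fix $0\ne p\in L$ and put $h:=\bigl(\sum_{k\ge 0}a_kx^k\bigr)p\in D\cdot L\subseteq L$; then $h\ne 0$ because $D$ is a domain, and I would lift $h$ to some $g\in J$ with $\pi(g)=h$. Since $\pi(fg)=\pi(f)h$ and $0\ne h\in D$ a domain, $fg=0$ forces $\pi(f)=0$, i.e.\ $f\in BS$; and then $fg=fh$ because $BS\cdot BS=0$, so $l.ann_S(g)=\{f\in BS:fh=0\}$. To see this is $\{0\}$ I would grade by the number of $a$-letters, $R=\bigoplus_{d\ge 0}R^{(d)}$, and pass to lowest $deg_A$-homogeneous parts (well defined since $deg_A$ is bounded below). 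The lowest part of $h$ equals $\bigl(\sum_k a_kx^k\bigr)$ times the lowest part of $p$, and its coefficient of $x^{k+j_0}$ contains $a_k\cdot(\text{coefficient of }x^{j_0}\text{ in the lowest part of }p)$ for a suitable fixed $j_0$; hence the monomials of the lowest part of $h$ begin with $a_k$ for arbitrarily large $k$. So, if $f=f_b+f_{ba}\in BS$ with $fh=0$, comparing lowest $deg_A$-parts gives $f_b\cdot(\text{lowest part of }h)=0$, which forces $f_b=0$ because $\K[[x]]$ is a domain and the leading generators $a_k$ are unbounded; then $f=f_{ba}$, and the lowest $deg_A$-part of $f_{ba}h$ is a product of nonzero homogeneous pieces, nonzero by Lemma~\ref{lema apart}(\ref{apart 2}) unless $f_{ba}=0$. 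Thus $l.ann_S(g)=\{0\}$ and $F=\{g\}$ is the finite set sought.

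The step I expect to be hardest is exactly this last one. The reason $R$ fails the \Beachy\ condition is that a finite subset of $I$ reaches only finitely many of the $a_k$, so one $b_M$ annihilates it; the construction above circumvents this in $S$ by concentrating unbounded reach into a single power series (multiplying $p$ on the left by $\sum_k a_kx^k$), so that neither a $b_l$ nor — by Lemma~\ref{lema apart}(\ref{apart 2}) — any element of $U_{ba}[[x]]$ can annihilate $h$. The care lies in setting up the $deg_A$-grading for power series whose coefficients may have unbounded $deg_A$: defining lowest-degree parts, checking they multiply as expected, and feeding in the coefficientwise inputs of Lemma~\ref{apart 5} and Lemma~\ref{lema apart}(\ref{apart 4}).
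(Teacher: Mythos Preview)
Your plan is essentially the paper's proof. For the first assertion the paper also takes $I=\sum_i Ra_i$ and kills any finite $F\subseteq I$ with a suitable $b_k$; for the second it makes the same case split on whether some element of $J$ has nonzero $\K[[x]]$-component, and in the residual case left-multiplies an element of $J$ by $\sum_{k\ge0}a_kx^k$ to manufacture a single series $h$ with $l.ann_S(h)=\{0\}$. One simplification you missed: since $U_a\cdot BR=0$, the product $\bigl(\sum_k a_kx^k\bigr)f$ already equals $\bigl(\sum_k a_kx^k\bigr)f_a=h\in U_a[[x]]$, so $h$ lies in $J$ itself and no separate lift $g$ is needed; the paper works directly with $h$.

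The only soft spot is the sentence ``$f_b\cdot(\text{lowest part of }h)=0$ forces $f_b=0$ because $\K[[x]]$ is a domain and the leading generators $a_k$ are unbounded''. A lowest-coefficient-in-$x$ argument does not close this, because for $s\in U_b$ and $c\in U_a$ one may well have $sc=0$ (e.g.\ $b_5a_1=0$), so the first nonzero coefficients of $f_b$ and of $h^{(e_0)}$ can multiply to zero. What is actually needed is the induction the paper carries out: writing $t=\sum_j s_jx^j$ and $h=\sum_k c_kx^k$ with $c_k=\sum_l a_{k-l}r_l$, one shows step by step, using Lemma~\ref{apart 5} to separate the contributions of the distinct $a_j$'s and then Lemma~\ref{lema apart}\ref{apart 2} to cancel the nonzero factor $r_{i_0}$, that $s_{j,ba}=0$ and $s_{j,b}a_k=0$ for all $j,k$, whence $s_j\in l.ann_R(A)=\{0\}$. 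Your final paragraph correctly names Lemma~\ref{apart 5} as the missing ingredient; just be aware that it enters through a genuine induction on the $x$-degree, not through a single $deg_A$-grading step.
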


\begin{proof}
Let $I$ be the left ideal of $R$ generated by $A$. By \ref{apart 3} in Lemma \ref{lema apart}, $l.ann_R(A) = \{0\}$, so $l.ann_R(I) = \{0\}$. Let
$F = \{r_1,\dots,r_n\}$ be a finite subset of $I$. Then, there exist $m_1,\dots, m_n\geq 1$, $w_{i,j}\in \langle A\rangle\cup V$ (recall that
$V = \{b_ja_{i_1}\cdots a_{i_n}\mid n\geq 1, i_1,\dots, i_n\geq 0\text{ and } j<i_1\}$) and $\lambda_{i,j}\in\K \binv\{0\}$ for all $1\leq j\leq m_i$
and for all $1\leq i\leq n$ such that $r_i = \ds{\sum_{j = 1}^{m_i}}\lambda_{i,j} w_{i,j}$ and $w_{i,j}\neq w_{i,l}$ for all $1\leq i\leq n$ and for
all $j\neq l$. Let $X = \{a_{k_{i,j}}\mid w_{i,j} = a_{k_{i,j}}w'_{i,j}\text{ for some } 1\leq i\leq n, 1\leq j\leq m_i\text{ and }w'_{i,j}\in\langle A
\rangle\cup\{1\}\}$. If $X = \emptyset$, then $b_0r_i = 0$ for all $1\leq i\leq n$, so $b_0\in l.ann_R(F)$. Assume $X\neq\emptyset$ and let
$k = \max\{k_{i,j}\mid a_{k_{i,j}} \in X\}$. Then, by the defining relations \ref{rel 1} and \ref{rel 2}, $b_kr_i = 0$ for all $1\leq i\leq n$, so
$b_k\in l.ann_R(F)$. Therefore, $R$ does not satisfy the left \Beachy condition.

We shall see now that $S = R[[x]]$ satisfies the left \Beachy condition. Let $J$ be a left ideal of $S$ such that $l.ann_S(J) = \{0\}$.
If there exists $f(x)\in J$ such that $f_0(x)\neq 0$, then, by Lemma \ref{apart 6}, we have that $l.ann_S(f(x)) = \{0\}$.

Suppose that for all $f(x)\in J$, $f_0(x) = 0$ and let $g(x) = \ds{\sum_{i\geq 0}a_ix^i}$. Since $l.ann_S(J) = \{0\}$, there exists $f(x)\in J$ such that
$g(x)f(x)\neq 0$, and $h(x) = g(x)f(x) = g(x)f_a(x)\in J$, since $J$ is a left ideal of $S$. We shall see that $l.ann_S(h(x)) = \{0\}$.

Suppose that there exists $t(x)\in S\setminus\{0\}$ such that $t(x)h(x) = 0$. Since $h(x)\in R_A[[x]]\setminus\{0\}$, it is clear by the
proof of Lemma \ref{apart 6}, that $t(x) = t_b(x)+t_{ba}(x)$ ($t_A(x) = 0$, $t(x) = t_B(x)$). Let $f_a(x) = \ds{\sum_{i\geq i_0}} r_ix^i$, where
$r_i\in U_a$ and $r_{i_0}\neq 0$. Define $c_k = \ds{\sum_{j=i_0}^k} a_{k-j}r_j\in U_a$ for all $k\geq i_0$, then, $h(x) = \ds{\sum_{k\geq i_0} c_kx^k}$.
Suppose that $t(x) = \ds{\sum_{j\geq 0}}s_jx^j$. We may assume without loss of generality that $s_0\neq 0$ and $s_j = s_{j,b}+s_{j,ba}$, where
$s_{j,b}\in U_b$ and $s_{j,ba}\in U_{ba}$, for all $j\geq 0$. Then, for all $k\geq i_0$, we have that
$$\sum_{j = i_0}^k s_{k-j}c_j = 0.$$

%%% Caso 1.
For $k = i_0$ we have that $s_0c_{i_0} = s_0a_0r_{i_0} = s_{0,ba}a_0r_{i_0} = 0$, because, by the relation \ref{rel 2}, $s_{0,b}a_0 = 0$. Then, by
\ref{apart 2} in Lemma~\ref{lema apart}, $s_{0,ba} = 0$, so $s_0 = s_{0,b}\neq 0$.

%%% Caso 2.
For $k = i_0+1$ we have that $s_0c_{i_0+1} + s_1c_{i_0} = s_{0,b}(a_0r_{i_0+1} + a_1r_{i_0}) + s_1a_0r_{i_0} = s_{0,b}a_1r_{i_0} +
s_{1,ba}a_0r_{i_0} = 0$, since by the relation \ref{rel 2}, $s_{0,b}a_0 = s_{1,b}a_0 = 0$. Then, by Lemma \ref{apart 5}, $s_{0,b}a_1r_{i_0} =
s_{1,ba}a_0r_{i_0} = 0$, so, by \ref{apart 2} in Lemma~\ref{lema apart}, $s_{0,b}a_1 = 0$ and $s_{1,ba} = 0$.

%%% Hipotesis de induccion
\underline{Induction Hypothesis:} Assume that $s_{i,ba} = 0$, for all $i<n$, and $s_{j,b}a_{i-j} = 0$, for all $i<n$ and for all $0\leq j\leq i$.

%%% Prueba por induccion
We shall see that $s_{n,ba} = 0$ and that $s_{0,b}a_n = s_{1,b}a_{n-1} = \dots = s_{n,b}a_0 = 0$.

For $k = n+i_0$ we have that $\ds{\sum_{j = i_0}^{n+i_0}} s_{n+i_0-j}c_j = \ds{\sum_{j = 0}^n} s_{n-j}c_{j+i_0} = 0$.
By the induction hypothesis, we have that, for all $j>0$,
$$s_{n-j}c_{j+i_0} = s_{n-j,b}c_{j+i_0} = \sum_{k=0}^j s_{n-j,b}a_{j-k}r_{k+i_0} = s_{n-j,b}a_jr_{i_0}.$$

Thus, $\ds{\sum_{j = 0}^n} s_{n-j}c_{j+i_0} = \ds{\sum_{j=0}^n} s_{n-j}a_jr_{i_0} = 0$, so, by Lemma \ref{apart 5}, we have that $s_{n-j}a_jr_{i_0} = 0$
for all $0\leq j\leq n$. In particular, by \ref{rel 2}, $s_na_0r_{i_0} = s_{n,ba}a_0r_{i_0} = 0$, so, by \ref{apart 2} in Lemma~\ref{lema apart},
$s_{n,ba} = 0$. Moreover, by \ref{apart 2} in Lemma~\ref{lema apart}, since $s_{n-j}a_jr_{i_0} = s_{n-j,b}a_jr_{i_0} = 0$ for all $j>0$, we have that
$s_{n-j,b}a_j = 0$ for all $j> 0$ and $s_{n,b}a_0 = 0$ by the relation \ref{rel 2}.

Therefore, $s_j = s_{j,b}$ for all $j\geq 0$ and $s_ja_{i-j} = 0$ for all $i\geq 0$ and for all $0\leq j\leq i$.
In particular, $s_0a_i = 0$ for all $i\geq 0$, so $s_0\in l.ann_R(A)$, but, by \ref{apart 3} in Lemma \ref{lema apart}, $l.ann_R(A) = \{0\}$, which is a
contradiction. Thus, $l.ann_S(h(x)) = \{0\}$.

Therefore, $R[[x]]$ satisfies the left \Beachy condition.
\end{proof}
Note that every ring is $1_R$-compatible, where $1_R$ denotes the identity automorphism of $R$, and $R[[x; 1_R]] = R[[x]]$. Therefore, Proposition
\ref{prob2} answers \ref{q2} in the negative even in the case of $\alpha$-compatible rings.

\begin{Pro}\label{prob1}
   \tab The ring $R$ is right but not left zip.
\end{Pro}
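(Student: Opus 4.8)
The plan is to verify the two halves separately, reusing the structure theory of $R$ established in Lemma~\ref{lema apart} and the auxiliary lemmas. For the easy direction, that $R$ is \emph{not} left zip: the set $A=\{a_i\mid i\ge 0\}$ satisfies $l.ann_R(A)=\{0\}$ by part~\ref{apart 3} of Lemma~\ref{lema apart}, but I claim no finite subset $F\subseteq A$ is left-cofaithful. Indeed, if $F\subseteq\{a_0,\dots,a_k\}$, then by the defining relation \ref{rel 2} (namely $b_ja_i=0$ iff $j\ge i$) we have $b_k a_i=0$ for all $i\le k$, so $0\ne b_k\in l.ann_R(F)$. Hence $R$ fails the left zip condition (and in fact fails the left \Beachy condition — this is exactly the first paragraph of the proof of Proposition~\ref{prob2}).

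The substantive part is to show $R$ is right zip. Let $X\subseteq R$ with $r.ann_R(X)=\{0\}$; I must produce a finite $F\subseteq X$ with $r.ann_R(F)=\{0\}$. First I would split according to whether some element of $X$ has nonzero ``scalar part''. If there is $r\in X$ with $r_0\ne 0$, then by part~\ref{apart 4} of Lemma~\ref{lema apart}, $r.ann_R(r)=\{0\}$, and $F=\{r\}$ works. So I may assume every $r\in X$ lies in $U_a\oplus U_b\oplus U_{ba}$. Next, since $r.ann_R(X)=\{0\}$ and (by Lemma~\ref{lema apart}\ref{apart 3} on the right, or by a direct check using \ref{rel 2}) $b_0\notin r.ann_R(X)$ forces some generator-containing obstruction to be killed, I would analyze how a candidate right annihilator $s=s_0+s_a+s_b+s_{ba}$ can annihilate elements of $X$ on the right: using the relations \ref{rel 1} ($b_jb_l=a_ib_j=0$) the components $s_b$ and (after peeling) $s_{ba}$ are severely constrained, reducing the problem to controlling $s_a$ and $s_0$ against the $U_a$-parts of finitely many elements of $X$. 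Because $R_A=\K+\K[\langle A\rangle]$ is an integral domain, killing the $U_a$-part of a single suitable element of $X$ on the right already forces $s_a=s_0=0$; then \ref{rel 1} and the argument in the proof of Proposition~\ref{prob2} (the inductive argument showing $s_{j,ba}=0$ and $s_{j,b}a_{i-j}=0$) force $s_b=s_{ba}=0$ as well, using finitely many more elements of $X$ to supply enough $a_i$'s. Collecting these finitely many witnesses gives the desired finite $F$.

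The main obstacle I expect is bookkeeping in the right-zip argument: unlike the left side, on the right the generators $a_i$ act on $b_j$ trivially ($a_ib_j=0$ is \emph{not} among the relations — rather $b_ja_i$ and $a_ib_j=0$; note \ref{rel 1} gives $a_ib_j=0$, so right multiplication by $b_j$ kills $U_a$-parts) and $b_ja_i=0$ only for $j\ge i$, so the asymmetry between ``how many $b$'s we need to annihilate on the left'' versus ``how many $a$'s we need on the right'' is exactly what breaks the left zip property while preserving the right one. I would be careful to show that $r.ann_R(X)=\{0\}$ genuinely provides, for each troublesome homogeneous component, a \emph{single} element of $X$ whose right-annihilator condition eliminates it, so that only finitely many elements of $X$ are used in total; the integral-domain property of $R_A$ and Lemma~\ref{apart 5} are the tools that make each such elimination step finitary rather than requiring infinitely many witnesses.
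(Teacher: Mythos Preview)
Your argument for ``not left zip'' is correct and matches the paper's (via Proposition~\ref{prob2}). The right-zip half, however, has a genuine gap. You correctly observe that if some $r\in X$ has $r_0\neq 0$ then $F=\{r\}$ works by Lemma~\ref{lema apart}\ref{apart 4}. But then you write ``So I may assume every $r\in X$ lies in $U_a\oplus U_b\oplus U_{ba}$'' and launch into an elaborate (and only sketched) program to extract a finite cofaithful subset in that case. This case does not need to be handled: it is \emph{impossible}. If every $r\in X$ has $r_0=0$, then by relation~\ref{rel 1} (which gives $a_ib_j=0$ and $b_jb_l=0$) we have $r_ab_0=r_bb_0=r_{ba}b_0=0$, hence $rb_0=0$ for every $r\in X$, so $0\neq b_0\in r.ann_R(X)$, contradicting $r.ann_R(X)=\{0\}$. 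That is the entire content of the paper's proof: either some element of $X$ has nonzero scalar part and a singleton works, or $b_0$ lies in $r.ann_R(X)$.

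You actually brush against this when you note ``$b_0\notin r.ann_R(X)$ forces some generator-containing obstruction to be killed'' and later that ``right multiplication by $b_j$ kills $U_a$-parts'', but you do not draw the immediate conclusion. Instead you propose to control $s_a,s_0,s_b,s_{ba}$ using ``finitely many more elements of $X$'', invoking Lemma~\ref{apart 5} and the inductive argument from Proposition~\ref{prob2}. That program cannot succeed as stated, because under the assumption that all scalar parts vanish one has $r.ann_R(X)\neq\{0\}$, so no finite (or infinite) subset of $X$ is right-cofaithful. The fix is simply to replace your second case analysis by the one-line contradiction above.
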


\begin{proof}
By Proposition~\ref{prob2}, we know that $R$ does not satisfy the left \Beachy condition. Therefore, $R$ is not left zip.

Let $X\subseteq R$ be such that $r.ann_R(X) = \{0\}$. Suppose that $r_0 = 0$ for all $r = r_0 + r_a + r_b + r_ba\in X$. Then,
$rb_0 = r_ab_0 + r_bb_0 + r_{ba}b_0 = 0$ by the relation \ref{rel 1}, for all $r\in X$, so $b_0\in r.ann_R(X)$, which is a contradiction.
Therefore, there exists $r = r_0 + r_a + r_b + r_ba\in X$ such that $r_0\neq 0$. Then, by \ref{apart 4} in Lemma \ref{lema apart}, $r.ann_R(r) = \{0\}$,
so $R$ is a right zip ring.
\end{proof}

Note that $R[[x]]$ satisfies the left \Beachy condition but it is not left zip, since, by \cite[Theorem 2.8]{Cortes2}, if $R[[x]]$ is left zip then
$R$ is left zip as well, but we have seen that $R$ does not satisfy the left \Beachy condition and, therefore, is not left zip.

\vspace{30pt} \noindent
\begin{center}
\begin{tabular}{l}
Elena Rodríguez-Jorge \\
Department of Mathematics\\
Universitat Autònoma de Barcelona\\
08193 Bellaterra (Barcelona), Spain
\end{tabular}
\end{center}
\end{document}